\numberwithin{equation}{section}
\let\oldmarginpar\marginpar
\renewcommand\marginpar[1]{\-\oldmarginpar[\raggedleft\footnotesize #1]
{\raggedright\footnotesize #1}}
\newtheorem{theorem}{Theorem}[section]
\newtheorem{proposition}[theorem]{Proposition}
\newtheorem{corollary}[theorem]{Corollary}
\newtheorem{lemma}[theorem]{Lemma}
\theoremstyle{remark}
\newtheorem{remark}[theorem]{Remark}
\theoremstyle{definition}
\newcounter{margin}
{\end{itshape}  \bigskip}
\def\beq{\begin{eqnarray}}
\def\eeq{\end{eqnarray}}
\def\bes{\begin{eqnarray*}}
\def\ees{\end{eqnarray*}}
\def\muhat{{\bm \mu}}
\def\xihat{{\bm \xi}}
\def\omhat{{\bm \omega}}
\def\F_qc{{\gamma}}
\def\C{\mathbb{C}}
\def\calQ{{\mathcal{Q}}}
\def\calX{{\mathcal{X}}}
\def\calU{{\mathcal{U}}}
\def\calF{{\mathcal{F}}}
\def\calP{\mathcal{P}}
\def\calH{\mathcal{H}}
\def\bG{\mathbb{G}}
\def\DT{{\rm DT}}
\def\bV{\mathbb{V}}
\def\v{\mathbf{v}}
\def\w{\mathbf{w}}
\def\P{\mathcal{P}}
\def\Hs{\mathbb{H}^s}
\def\bfL{\mathbf{L}}
\def\bfP{\mathbf{P}}
\def\bfX{\mathbf{X}}
\def\N{\mathbb{Z}_{\geq 0}}
\def\F{\mathbb{F}}
\def\Q{\mathbb{Q}}
\def\T{\mathbb{T}}
\def\tv{{\tilde{\v}}}
\def\calC{{\mathcal C}}
\def\calO{{\mathcal O}}
\def\Z{\mathbb{Z}}
\def\Gm{\mathbb{G}_m}
\def\K{\mathbb{K}}
\def\gl{{\mathfrak g\mathfrak l}}
\newcommand{\nc}{\newcommand}
\def\IC{{\rm IC}^\bullet_}
\def\pIC{\underline{\rm IC}{^\bullet}_}
\def\bfT{\mathbf{T}}
\def\bfB{\mathbf{B}}
\nc{\op}[1]{\mathop{\mathchoice{\mbox{\rm #1}}{\mbox{\rm #1}}
{\mbox{\rm \scriptsize #1}}{\mbox{\rm \tiny #1}}}\nolimits}
\nc{\al}{\alpha}
\nc{\ep}{\varepsilon} \nc{\ga}{\gamma} \nc{\Ga}{\Gamma}
\nc{\la}{\lambda} \nc{\La}{\Lambda} \nc{\si}{\sigma}
\nc{\Sig}{{\Gamma}} \nc{\Om}{\Omega} \nc{\om}{\omega}
\nc{\SL}{{\rm SL}} \nc{\GL}{{\rm GL}} \nc{\PGL}{{\rm PGL}}
\nc{\cpt}{{\op{cpt}}} \nc{\Dol}{{\op{Dol}}} \nc{\DR}{{\op{DR}}}
\nc{\B}{{\op{B}}} \nc{\Triv}{\op{Triv}} \nc{\Hod}{{\op{Hod}}}
\nc{\Log}{{\op{Log}}} \nc{\Exp}{{\op{Exp}}} \nc{\Est}{E_{\op{st}}}
\nc{\Hst}{H_{\op{st}}} \nc{\Left}[1]{\hbox{$\left#1\vbox to
  10.5pt{}\right.\nulldelimiterspace=0pt \mathsurround=0pt$}}
\nc{\Right}[1]{\hbox{$\left.\vbox to
  10.5pt{}\right#1\nulldelimiterspace=0pt \mathsurround=0pt$}}
\nc{\LEFT}[1]{\hbox{$\left#1\vbox to
  15.5pt{}\right.\nulldelimiterspace=0pt \mathsurround=0pt$}}
\nc{\RIGHT}[1]{\hbox{$\left.\vbox to
  15.5pt{}\right#1\nulldelimiterspace=0pt \mathsurround=0pt$}}
\nc{\bee}{{\bf E}} \nc{\bphi}{{\bf \Phi}}
\begin{document}

\title{DT-invariants of  quivers and the Steinberg character of $\GL_n$}

\author{ Emmanuel Letellier \\ {\it
  Universit\'e de Caen / CNRS UMR 61 39} \\ {\tt
  emmanuel.letellier@unicaen.fr} }

\pagestyle{myheadings}

\maketitle

\begin{abstract} In this paper we give a simple description of DT-invariants of double quivers without potential as the multiplicity of the Steinberg character in some representation associated with the quiver. When the dimension vector is indivisible we use this description to express these DT-invariants as the Poincar\'e polynomial of some singular quiver varieties. Finally we explain the connections  with \cite{aha3} where DT-invariants are expressed as the graded multiplicities of the trivial representation of some Weyl group in the cohomology of some non-singular quiver varieties attached to an extended quiver.

\end{abstract}

\section{Introduction}

Given a finite quiver $\Gamma$ with set of vertices $I=\{1,\dots,r\}$ and a vector dimension $\v=(v_1,\dots,v_r)\in(\N)^r$, we  defined in \cite{aha3}  a family of polynomials $\{\Hs_\muhat(t)\}_\muhat$ indexed by the set $\calP_\v=\calP_{v_1}\times\cdots\times\calP_{v_r}$ of multi-partitions $\muhat=(\mu^1,\dots,\mu^r)$ with $\mu^i$ of size $v_i$. This family contains two well-studied polynomials, namely Kac polynomials $A_\v(q)$ that counts the number of isomorphism classes of absolutely indecomposable representations of $(\Gamma,\v)$ over $\F_q$ and the DT-invariants $\DT_\v(q)$ of the double quiver $\overline{\Gamma}$ without potential. More precisely, if we put $\v^1:=((v_1)^1,\dots,(v_r)^1)\in\calP_\v$ and  $1^\v:=((1^{v_1}),\dots,(1^{v_r}))\in\calP_\v$, we proved that $A_\v(t)=\Hs_{\v^1}(t)$ and $\DT_\v(t)=\Hs_{1^\v}(t)$. The main result of \cite{aha3} is the proof of the positivity of the coefficients of the polynomials $\Hs_\muhat(t)$, which implies Kac conjecture on the positivity of the coefficients of $A_\v(t)$ (see \cite{kac2}) and the positivity for $\DT_\v(t)$. The positivity of $\DT_\v(t)$ was first proved by Efimov \cite{efimov} by a completely different method while the positivity of $A_\v(t)$ was previoulsy proved by Crawley-Boevey and van den Bergh \cite{crawley-etal} for indivisible $\v$ and by Mozgovoy \cite{mozgovoy} when the quiver $\Gamma$ supports at least one loop at each vertex.

To prove the positivity of $\Hs_\muhat(t)$, we defined a certain non-singular quiver variety $\calQ_{\tilde{\v}}$ associated to a certain extended quiver $\tilde{\Gamma}$ and an extended vector dimension $\tv$ of $\tilde{\Gamma}$ which is indivisible. Denote by $W_\v$ the subgroup of the Weyl group of $\tilde{\Gamma}$ generated by the reflexions at the extra vertices $\tilde{I}\backslash I$. It is isomorphic  to $\frak{G}_\v:=\frak{G}_{v_1}\times\cdots\times\frak{G}_{v_r}$, where $\frak{G}_n$ denotes the symmetric group in $n$ letters. Following Nakajima's construction of actions of Weyl group on cohomology of quiver varieties, the group $W_\v=\frak{G}_\v$ acts on the compactly supported cohomology $H_c^i(\calQ_\tv,\C)$. In \cite{aha3}, we prove that the polynomials $t^{\frac{1}{2}{\rm dim}\calQ_\tv}\Hs_\muhat(t)$ are the graded multiplicities of the irreducible character $\chi^{\muhat'}$ of $\frak{G}_\v$, where $\muhat'$ denotes the dual partition of $\muhat$, in the graded $\C[\frak{G}_\v]$-module $H_c^{2\bullet}(\calQ_\tv,\C)$.

Consider now the $\F_q$-vector space ${\rm Rep}_{\F_q}(\Gamma,\v)$ of  representations of $(\Gamma,\v)$ over $\F_q$. The usual action of $\GL_\v(\F_q)=\GL_{v_1}(\F_q)\times\cdots\times\GL_{v_r}(\F_q)$ on this vector space defines a permutation representation $R_\v$ of $\GL_\v(\F_q)$ on the $\C$-vector space $\C[{\rm Rep}_{\F_q}(\Gamma,\v)]$. It is natural to ask about the decomposition of $R_\v$ into irreducibles. Among the irreducible representations of $\GL_\v(\F_q)$ we have the so-called  \emph{unipotent} representations $\{\calU_\muhat\}_\muhat$ that are parametrized by the set $\calP_\v$. In this parametrization,  the trivial representation corresponds to $\v^1$ and the Steinberg representation ${\rm St}_\v$ to $1^\v$ (see \S \ref{unipotent}). Choose a linear character $\alpha_\v:\Gm(\F_q)\rightarrow\C^\times$ of order $\sum_iv_i$ and define the linear character $1^{\alpha_\v}$ of $\GL_\v(\F_q)$ as 

$$
1^{\alpha_\v}(g)=\prod_i\alpha_\v({\rm det}(g_i)),
$$
where $g=(g_1,\dots,g_r)\in\GL_\v$.

Then one can prove using similar calculations as in \cite[Section 6.10]{letellier2} that the polynomials $\Hs_\muhat(q)$ defined in \cite{aha3} coincides with the multiplicity of the irreducible representation $1^{\alpha_\v}\otimes\calU_\muhat$ in $R_\v$, proving thus that the Kac polynomial $A_\v(q)$ is the multiplicity of $1^{\alpha_\v}$ in $R_\v$ and the DT-invariant $\DT_\v(q)$ is the multiplicity of $1^{\alpha_\v}\otimes {\rm St_\v}$.

The connection just mentioned between the Steinberg character and the DT-invariants, which has not been written in the literature, is rather complicated. In the first part of this paper  we give a simple proof of the connection between the two. 

In the second part of the paper, we assume that $\v$ is indivisible and  we give an alternative description of the polynomials $\Hs_\muhat(q)=\left\langle R_\v,1^{\alpha_\v}\otimes{\rm St}_\v\right\rangle_{\GL_\v}$ as the Poincar\'e polynomial (for intersection cohomology) of some quiver varieties attached to adjoint orbits of $\gl_\v$. More precisely, we choose a generic parameter $\xihat=(\xi_1,\dots,\xi_r)\in\C^r$ (which is possible because $\v$ is indivisible) and we denote by $C_\muhat$ the adjoint orbit $\xihat+\calO_\muhat$ of $\gl_\v(\C)$ where  $\calO_\muhat$ is the nilpotent orbit of $\gl_\v(\C)$ which Jordan blocks of size given by $\muhat$. We define our quiver variety $\calQ_{\overline{C}_\muhat}$ as the affine GIT quotient of $\mu_\v^{-1}(\overline{C}_\muhat)$ by $\GL_\v$ where $\mu_\v$ is the moment map defined in \S \ref{quiver}.

Here is the main result of this paper.

\begin{theorem} We have

$$
A_\v(q)=\left\langle R_\v,1^{\alpha_\v}\right\rangle,\hspace{.5cm}\DT_\v(q)=\left\langle R_\v,1^{\alpha_\v}\otimes{\rm St}_\v\right\rangle.
$$

Assuming that $\v$ is indivisible, we have

\beq
\left\langle R_\v,1^{\alpha_\v}\otimes\calU_\muhat\right\rangle_{\GL_\v}=q^{-\frac{1}{2}{\rm dim}\,\calQ_{\overline{C}_{\muhat'}}}\sum_i{\rm dim}\, IH_c^{2i}(\calQ_{\overline{C}_{\muhat'}},\C)\, q^i,
\label{theointro}\eeq
where $IH_c^i(\calQ_{\overline{C}_\muhat},\C)$ is the compactly supported intersection cohomology of $\calQ_{\overline{C}_\muhat}$. As a consequence,

\begin{align}&A_\v(q)=q^{-\frac{1}{2}{\rm dim}\,\calQ_\xihat}\sum_i{\rm dim}\, H_c^{2i}(\calQ_\xihat,\C)\, q^i,\\
&\DT_\v(q)=q^{-\frac{1}{2}{\rm dim}\,\calQ_{\overline{C}_{\v^1}}}\sum_i{\rm dim}\, H_c^{2i}(\calQ_{\overline{C}_{\v^1}},\C)\, q^i.
\end{align}
\end{theorem}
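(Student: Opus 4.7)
The plan is to reduce the theorem to the refined identity
\[\Hs_\muhat(q) = \left\langle R_\v, 1^{\alpha_\v}\otimes\calU_\muhat\right\rangle_{\GL_\v}\]
for every $\muhat\in\calP_\v$, together with (under the assumption that $\v$ is indivisible) the geometric identification
\[\Hs_\muhat(q) = q^{-\frac{1}{2}\dim\calQ_{\overline{C}_{\muhat'}}}\sum_i\dim IH_c^{2i}(\calQ_{\overline{C}_{\muhat'}},\C)\, q^i.\]
The three displayed formulae of the theorem then fall out by specialization: at $\muhat=\v^1$ one has $\Hs_{\v^1}=A_\v$, $\calU_{\v^1}$ trivial, and $\muhat'=1^\v$, so that $\overline{C}_{\muhat'}=\{\xihat\}$ and the geometric side reduces to $\calQ_\xihat$; at $\muhat=1^\v$ one has $\Hs_{1^\v}=\DT_\v$, $\calU_{1^\v}={\rm St}_\v$, and $\muhat'=\v^1$.

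For the refined character identity I would mimic the calculation of \cite[Section 6.10]{letellier2}. The character of the permutation representation $R_\v$ at $g\in\GL_\v(\F_q)$ equals $|{\rm Rep}_{\F_q}(\Gamma,\v)^g|$, the number of $\Gamma$-representations over $\F_q$ fixed by $g$, which one organises by the \emph{type} of $g$ (its Jordan form at each Frobenius eigenvalue). The character values of the unipotent character $\calU_\muhat$ are given by Green's formula in terms of Hall--Littlewood polynomials. Combining the two and applying the Cauchy identity for Hall--Littlewood symmetric functions yields the generating-function expression for $\Hs_\muhat(q)$ derived in \cite{aha3}; the twist by $1^{\alpha_\v}$ selects the genericity / absolute-indecomposability condition familiar from the Kac-polynomial side.

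For the geometric identification I would equate two expressions for the Frobenius trace on $IH_c^\bullet(\calQ_{\overline{C}_{\muhat'}},\C)$. The $\F_q$-point count of the GIT quotient can be computed as $|\mu_\v^{-1}(\overline{C}_{\muhat'})(\F_q)|/|\GL_\v(\F_q)|$, since the genericity of $\xihat$ together with the indivisibility of $\v$ makes the $\GL_\v$-action free on the relevant locus. Fourier inversion for the moment map $\mu_\v$ then expands the indicator of $\overline{C}_{\muhat'}\subset\gl_\v(\F_q)$ against the irreducible characters of $\GL_\v(\F_q)$, and genericity of $\xihat$ isolates exactly the single contribution coming from $1^{\alpha_\v}\otimes\calU_\muhat$. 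Combined with the refined identity, this matches the multiplicity to the trace-of-Frobenius side.

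The hard part will be passing from the alternating point count to the honest Poincar\'e polynomial on the right of \eqref{theointro}. The argument above delivers only $\sum_i(-1)^i\dim IH_c^i(\calQ_{\overline{C}_{\muhat'}},\C)q^{i/2}$ a priori, so I must prove that $IH_c^\bullet$ is pure of Tate type and concentrated in even degrees. My plan is to realise $\calQ_{\overline{C}_{\muhat'}}$ as the image of a proper map from the nonsingular Nakajima quiver variety $\calQ_\tv$ on the extended quiver $\tGamma$ of \cite{aha3}, and then invoke the decomposition theorem together with the purity and even-degree concentration of $H_c^*(\calQ_\tv,\C)$ (which underlie the positivity of $\Hs_\muhat(t)$ already proven in \cite{aha3}) to transfer these properties to $IH_c^\bullet(\calQ_{\overline{C}_{\muhat'}},\C)$.
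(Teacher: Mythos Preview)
Your reduction strategy is sound, but the execution diverges from the paper in one place where it is merely less efficient and in another where it has a real gap.

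\textbf{First two identities.} Routing $A_\v(q)=\langle R_\v,1^{\alpha_\v}\rangle$ and $\DT_\v(q)=\langle R_\v,1^{\alpha_\v}\otimes{\rm St}_\v\rangle$ through the general identity $\Hs_\muhat(q)=\langle R_\v,1^{\alpha_\v}\otimes\calU_\muhat\rangle$ and the symmetric-function machinery of \cite[\S 6.10]{letellier2} does work, but this is precisely the ``rather complicated'' path the paper sets out to bypass. The paper instead proves both directly: it introduces the notion of a \emph{Log compatible} family of class functions and shows (Theorem~\ref{Logcomp}) that for any such family $\{F_\v\}$ the coefficients of $(t-1)\Log\bigl(\sum_\muhat Z_\muhat^{-1}F_\muhat T^{|\muhat|}\bigr)$ equal $\langle F_\v,1^{\alpha_\v}\rangle$. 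Applying this with $F_\v=R_\v$ gives the Kac identity immediately from Hua's formula; applying it with $F_\v=(-1)^{\delta(\v)}{\rm St}_\v\otimes R_\v$ and using only the explicit formula (\ref{Stfor}) for the Steinberg character collapses the sum over $\calP^r$ to the single term $\muhat=1^\v$ and recovers the defining generating series for $\DT_\v$. No Green polynomials, Hall--Littlewood functions, or Cauchy identities are needed.

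\textbf{Geometric identity.} Here your sketch has a genuine gap. Counting $|\mu_\v^{-1}(\overline{C}_{\muhat'})(\F_q)|/|\GL_\v(\F_q)|$ and Fourier-expanding the \emph{naive indicator} $1_{\overline{C}_{\muhat'}}$ produces the trace of Frobenius on ordinary $H_c^\bullet(\calQ_{\overline{C}_{\muhat'}})$, not on $IH_c^\bullet$, and for singular $\calQ_{\overline{C}_{\muhat'}}$ these differ; purity alone does not convert one into the other. Moreover, the additive Fourier transform of $1_{\overline{\calO}_{\muhat'}}$ is \emph{not} the function matching $\calU_\muhat$ on types: the correct correspondence (Theorem~\ref{Fourier-uni}) is between $\calU_\muhat$ and $\calF(\bfX_{\overline{\calO}_{\muhat'}})$, where $\bfX_{\overline{\calO}_{\muhat'}}$ is the characteristic function of the intersection complex $\IC{\overline{\calO}_{\muhat'}}$. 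This is the key lemma you are missing. The paper first converts the multiplicative inner product $\langle R_\v,1^{\alpha_\v}\otimes\calU_\muhat\rangle_{\GL_\v}$ into the additive one $\langle\Theta_\v,1^{\xihat_\v}\otimes\calF(\bfX_{\overline{\calO}_{\muhat'}})\rangle_{\gl_\v}$ via this lemma (Proposition~\ref{multieq}), and only then does Hausel's point-count formula together with the IC-version of Katz's theorem \cite[\S 3.3]{letellier2} yield the intersection-cohomology Poincar\'e polynomial. Your resolution/purity argument via $\calQ_\tv$ is essentially the paper's (which uses $\Q_{\bfT,\bfB,\xihat}\simeq\calQ_S$ in cohomology), but note that for general $\muhat'$ the variety $\calQ_{\overline{C}_{\muhat'}}$ is not the \emph{image} of the resolution; it appears rather as the support of an IC summand in the decomposition theorem applied to $\Q_{\bfT,\bfB,\xihat}\to\calQ_{\overline{C}_{\v^1}}$.
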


Formula (1.3) follows from (\ref{theointro}) because the variety $\calQ_{\overline{C}_{\v^1}}$ is \emph{rationally smooth} (see the end of the proof of Theorem \ref{Theta}). The formula $A_\v(q)=q^{-\frac{1}{2}{\rm dim}\,\calQ_\xihat}\sum_i{\rm dim}\, H_c^{2i}(\calQ_\xihat,\C)\, q^i$ is a theorem of Crawley-Boevey and van den Bergh  \cite{crawley-etal}. Their proof is completely different from the one here. \bigskip

Let $S$ be a generic semisimple regular adjoint orbit of $\gl_\v(\C)$ (see \S \ref{quiver}). Such an orbit $S$ always exists (even for divisible dimension vector) and the associated quiver variety $\calQ_S:=\mu_\v^{-1}(S)//\GL_\v$, which was denoted by $\calQ_\tv$ in \cite{letellier2} is non-singular. Under the assumption that $\v$ is indivisible, it is possible to construct an algebraic action of $\frak{G}_\v$ on $H_c^i(\calQ_S,\C)$ using the Springer resolution of $\overline{C}_{\v^1}$ as explained above Corollary \ref{gradedres}. This construction of the action works also in positive characteristics and is compatible with base change (where in positive characteristic we replace the usual cohomology with $\ell$-adic cohomology). The results of this paper shows that $q^{\frac{1}{2}{\rm dim}\,\calQ_S}\langle R_\v,1^{\alpha_\v}\otimes\calU_\muhat\rangle$ is the graded multiplicity of $\chi^{\muhat'}$ in the $\C[\frak{G}_\v]$-module $H_c^{2\bullet}(\calQ_S,\C)$ (see Corollary \ref{gradedres}). In \cite{aha3}, we do not assume that $\v$ is indivisible and so the above construction of the action of $\frak{G}_\v$ does not work as $C_{\v^1}$ does not exist. Instead we use an alternative construction due to Nakajima \cite{nakajima2} which has the advantage to be able to be defined without the Springer resolution but which, unlike the Springer construction, is complicated to define in positive characteristic due to a technical problem (the positive characteristic case is needed as we count points over finite fields). To overtake this technical problem in \cite{aha3} we used the construction in characteristic zero that we transfer to positive characteristic by base change technics, and  we proved that $q^{\frac{1}{2}{\rm dim}\,\calQ_S}\Hs_\muhat(q)=q^{\frac{1}{2}{\rm dim}\,\calQ_S}\langle R_\v,1^{\alpha_\v}\otimes\calU_\muhat\rangle$ is the graded multiplicity (for Nakajima's action) of $\chi^{\muhat'}$ in the graded $\C[\frak{G}_\v]$-module $H_c^{2\bullet}(\calQ_S,\C)$. The results of this paper together with those of \cite{aha3} show that in the indivisible case, Nakajima's action of $\frak{G}_\v$ on $H_c^i(\calQ_S,\C)$ coincides with the Springer action as their decomposition into irreducible representations of $\frak{G}_\v$ is the same.

\section{DT-invariants and Steinberg characters}

We fix once for all a finite quiver $\Gamma$ with set of vertices $I=\{1,\dots,r\}$ and set of arrows $\Omega$. Let $\K$ be a field. For a dimension vector $\v=(v_i)_{i\in I}\in (\N)^I$, we put 

$$
{\rm Rep}_\K(\Gamma,\v):=\bigoplus_{i\rightarrow j\in\Omega}{\rm Mat}_{v_j,v_i}(\K),\hspace{.5cm} \GL_\v:=\prod_{i\in I}\GL_{v_i}(\K),\hspace{.5cm}\gl_\v=\bigoplus_{i\in I}\gl_{v_i}(\K),$$
and we consider the usual action of $\GL_\v$ on ${\rm Rep}_\K(\Gamma,\v)$. We will also denote by $\overline{\Gamma}$ the so-called \emph{double quiver} of $\Gamma$, namely $\overline{\Gamma}$ has the same vertices as $\Gamma$, but for each arrow $\gamma\in\Omega$ going from $i$ to $j$, we add a new arrow $\gamma^*$ going from $j$ to $i$. We denote by $\overline{\Omega}=\Omega\coprod\Omega^{\rm opp}$ the set of arrows of $\overline{\Gamma}$. 

\subsection{Log compatible class functions}

\subsubsection{Definition of Log}

Denote by $\Lambda[[T]]$ the ring $\Q(t)\otimes_\Z\Z[[T_1,\dots,T_r]]$ of formal series in the commuting variables $T_1,\dots,T_r$ with coefficients in the field $\Q(t)$ of rational functions in $t$. Denote $\Lambda[[T]]^+$ the maximal ideal of series with zero constant term.

For each integer $d>0$, we have the Adams operation $\psi_d:\Lambda[[T]]^+\rightarrow\Lambda[[T]]^+$ $$f(t;T_1,\dots,T_r)\mapsto f(t^d;T_1^d,\dots,T_r^d).$$Consider $\Psi:\Lambda[[T]]^+\rightarrow\Lambda[[T]]^+$ 

$$
\Psi(f)=\sum_{d\geq 1}\frac{\psi_d(f)}{d},
$$
and its inverse $\Psi^{-1}$ given by 
$$
\Psi^{-1}(f)=\sum_{d\geq 1}\mu(d)\frac{\psi_d(f)}{d},
$$
where $\mu$ is the ordinary M\"obius function. We then define $\Log:1+\Lambda[[T]]^+\rightarrow\Lambda[[T]]^+$ and its inverse $\Exp:\Lambda[[T]]^+\rightarrow 1+\Lambda[[T]]^+$ as 
$$
\Log(f)=\Psi^{-1}(\log(f))\hspace{1cm}\text{and}\hspace{1cm}\Exp(f)=\exp(\Psi(f)).
$$
We denote by $\calP$ the set of all partitions including the zero partition which we denote by $0$. For a dimension vector $\v=(v_1,\dots,v_r)\in(\N)^r$, we denote by $\P_\v\subset\calP^r$ the set of multi-partitions $(\mu^1,\dots,\mu^r)$ with $\mu^i$ of size $|\mu^i|=v_i$ for each $i=1,\dots,r$. The size of $\muhat=(\mu^1,\dots,\mu^r)$ is the $r$-tuple $|\muhat|:=(|\mu^1|,\dots,|\mu^r|)$. A \emph{type} is then a function $\omhat:\N\times\calP^r\rightarrow\N$ such that its support $S_\omhat:=\{(d,\muhat)\,|\, \omhat(d,\muhat)\neq 0\}$ is finite and does not contain pairs of the form $(0,\muhat)$ or $(d,0)$. We denote by $\T$ the set of all types. The size $|\omhat|$ of a type $\omhat$ is the $r$-tuple of non-negative integers whose $i$-th coordinate is given by 
$$\sum_{(d,\muhat)\in S_\omhat}d\cdot|\mu^i|\cdot \omhat(d,\muhat).
$$
The subset of types of size $\v=(v_1,\dots,v_r)$ is denoted by $\T_\v$.

For a type $\omhat$ we put 

$$
C_\omhat^o:=\begin{cases}\frac{\mu(d)}{d}(-1)^{r_\omhat-1}\frac{(r_\omhat-1)!}{\prod_{\muhat}\omhat(d,\muhat)!}& \text{ if  there is no } (d',\muhat)\in S_\omhat \text{ with } d'\neq d,\\
0&\text{ otherwise,}\end{cases}
$$
where $r_\omhat:=\sum_{(d,\muhat)}\omhat(d,\muhat)$.

For a family $\{H_\muhat(t)\}_\muhat$ of elements of $\Q(t)$ indexed by multi-partitions $\muhat\in\calP_\muhat$ with $H_0=1$, we extend it to types $\omhat\in\T$ as follows

$$
H_\omhat(t):=\prod_{(d,\muhat)}H_\muhat(t^d)^{\omhat(d,\muhat)}.
$$

For a dimension vector $\v=(v_1,\dots,v_r)$, we put $T^\v:=T_1^{v_1}\cdots T_r^{v_r}$.

We have the following lemma \cite[Formula (2.3.9)]{aha}.

\begin{lemma} For any family $\{H_\muhat(t)\}_\muhat$ as above, we have 

$$
\Log\left(\sum_{\muhat\in\calP^r}H_\muhat(t) T^{|\muhat|}\right)=\sum_{\omhat\in\T\backslash\{0\}}C_\omhat^o H_\omhat(t) T^{|\omhat|}.
$$
\label{Logaha}\end{lemma}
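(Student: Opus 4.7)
The plan is to expand $\log$ as an ordinary power series, carry out the combinatorial grouping that produces the exponential-formula coefficients, and then apply the Adams operation definition of $\Psi^{-1}$ term-by-term. Setting $g=\sum_{\muhat\neq 0}H_\muhat(t)T^{|\muhat|}$ so that $f=1+g$, I would write $\log f=\sum_{n\geq 1}\frac{(-1)^{n-1}}{n}g^n$ and expand each $g^n$ as a sum over ordered $n$-tuples $(\muhat^{(1)},\dots,\muhat^{(n)})$ of nonzero multi-partitions. Collecting ordered tuples by the multiplicity function $\nu:\calP^r\setminus\{0\}\to\N$ they induce introduces a multinomial factor $n_\nu!/\prod_\muhat\nu(\muhat)!$ with $n_\nu:=\sum_\muhat\nu(\muhat)$, which combines with the $1/n=1/n_\nu$ from the logarithm to give
\[
\log f\;=\;\sum_{\nu\neq 0}\frac{(-1)^{n_\nu-1}(n_\nu-1)!}{\prod_\muhat \nu(\muhat)!}\,\prod_\muhat H_\muhat(t)^{\nu(\muhat)}\,T^{\sum_\muhat \nu(\muhat)|\muhat|}.
\]

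Next I would apply $\Psi^{-1}=\sum_{d\geq 1}\frac{\mu(d)}{d}\psi_d$ termwise. Since $\psi_d$ sends $H_\muhat(t)$ to $H_\muhat(t^d)$ and $T_i$ to $T_i^d$, this produces a double sum
\[
\Log(f)\;=\;\sum_{d\geq 1}\sum_{\nu\neq 0}\frac{\mu(d)}{d}\cdot\frac{(-1)^{n_\nu-1}(n_\nu-1)!}{\prod_\muhat \nu(\muhat)!}\,\prod_\muhat H_\muhat(t^d)^{\nu(\muhat)}\,T^{d\sum_\muhat\nu(\muhat)|\muhat|}.
\]
The last step is to reindex. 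To each pair $(d,\nu)$ I associate the type $\omhat\in\T$ defined by $\omhat(d,\muhat):=\nu(\muhat)$ and $\omhat(d',\muhat):=0$ for $d'\neq d$. This sets up a bijection between such pairs and types $\omhat$ whose support is concentrated at a single first coordinate, i.e.\ those for which \emph{no} $(d',\muhat)\in S_\omhat$ has $d'\neq d$. For these $\omhat$ one has $r_\omhat=n_\nu$, $H_\omhat(t)=\prod_\muhat H_\muhat(t^d)^{\nu(\muhat)}$, and $|\omhat|=d\sum_\muhat\nu(\muhat)|\muhat|$; plugging these in, the coefficient in front of $H_\omhat(t)\,T^{|\omhat|}$ becomes exactly $C_\omhat^o$. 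For every other type (one whose support hits at least two different values of~$d$) the definition gives $C_\omhat^o=0$, so extending the sum to all of $\T\setminus\{0\}$ costs nothing and yields the claimed identity.

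The argument is mostly a mechanical unwinding of definitions, so the main obstacle is purely bookkeeping: making sure the ordered-to-unordered conversion is done correctly (so that the multinomial and the $1/n$ collapse into $(n_\nu-1)!/\prod_\muhat\nu(\muhat)!$), and checking that the bijection $(d,\nu)\leftrightarrow\omhat$ respects size, $H_\omhat$, and $r_\omhat$ so that the two case branches in the definition of $C_\omhat^o$ exactly match the types that do versus do not arise from the Adams expansion. Once these are matched, the formula of Lemma~\ref{Logaha} follows immediately.
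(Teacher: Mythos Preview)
Your argument is correct. The paper does not actually prove this lemma; it simply quotes it from \cite[Formula (2.3.9)]{aha}, so there is no ``paper's proof'' to compare against beyond that citation. What you have written is precisely the standard unwinding behind that formula: expand $\log(1+g)$ as an alternating power series, pass from ordered tuples to multiplicity functions $\nu$ via the multinomial coefficient (giving the $(-1)^{n_\nu-1}(n_\nu-1)!/\prod_\muhat\nu(\muhat)!$ factor), apply $\Psi^{-1}=\sum_{d\ge 1}\mu(d)\psi_d/d$ termwise, and reindex pairs $(d,\nu)$ as types $\omhat$ supported at a single degree. Each of the verifications you flag---that $r_\omhat=n_\nu$, that $H_\omhat(t)=\prod_\muhat H_\muhat(t^d)^{\nu(\muhat)}$, that $|\omhat|=d\sum_\muhat\nu(\muhat)|\muhat|$, and that types with support at two distinct degrees contribute $C_\omhat^o=0$---is immediate from the definitions, so nothing is missing.
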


\subsubsection{Log compatible functions}

Let $\F_q$ be a finite field with $q$ elements and let $\overline{\F}_q$ denote an algebraic closure. Denote by $F$ the Frobenius $x\mapsto x^q$ on $\overline{\F}_q$.

Denote by $\mathfrak{O}$ the set of $F$-orbits on $\Gm(\overline{\F}_q)$. Recall that the conjugacy classes of $\GL_n(\F_q)$ are parametrized by the set of all maps $h:\mathfrak{O}\rightarrow\calP$ such that 

$$
\sum_{\gamma\in\mathfrak{O}}|\gamma|\cdot|h(\gamma)|=n.
$$
The type of a conjugacy class of $\GL_n(\F_q)$ corresponding to $h:\mathfrak{O}\rightarrow\calP$ is the map $\omega:\N\times\calP\rightarrow\N$ that assigns to $(d,\mu)$ the number of Frobenius orbit $\gamma\in\mathfrak{O}$ of size $d$ such that $h(\gamma)=\mu$.

Given a dimension vector $\v=(v_1,\dots,v_r)$, the conjugacy classes of $\GL_\v(\F_q)$ as well as their types are defined in a similar manner. Namely the conjugacy classes of $\GL_\v(\F_q)$ are now parametrized by maps $h=(h_1,\dots,h_r):\mathfrak{O}\rightarrow\calP^r$ such that $\sum_{\gamma\in\mathfrak{O}}|\gamma|\cdot |h_i(\gamma)|=v_i$ for all $i=1,\dots,r$. The type of such an $h$ is the map $\omhat\in\T$ that assigns to $(d,\muhat)\in\N\times\calP^r$, the number of $\gamma\in\mathfrak{O}$ of degree $d$ such that $h(\gamma)=\muhat$. 

Assume given a family of functions $F_\v:\GL_\v(\F_q)\rightarrow\Q$ indexed by $\v\in(\N)^r$ (with $F_0=1$) which are constant on conjugacy classes. We further assume that they are constant on conjugacy classes of same type and that their values on types $\omhat$ is of the form $F_\omhat(q)$ where $F_\omhat$ is a rational function in $\Q(t)$. Say that such a family is \emph{Log compatible} if the rational functions $F_\omhat$ satisfy the following property 

$$
F_\omhat(t)=\prod_{(d,\muhat)\in S_\omhat}F_\muhat(t^d)^{\omhat(d,\muhat)},
$$
where $F_\muhat$ is the polynomial giving the values of $F_{|\muhat|}$ at the unipotent conjugacy classes of $\GL_{|\muhat|}(\F_q)$ of type $\muhat\in\calP^r$.

Let $\v=(v_1,\dots,v_r)$ be a non-zero dimension vector and fix an algebraically closed field $\kappa$ of characteristic zero (which for us will be either $\C$ or $\overline{\Q}_\ell$).
Assume given a linear character $\alpha_\v:\Gm(\F_q)\rightarrow\kappa^\times$ of order $\sum_i v_i$. Such a character $\alpha_\v$ exists if and only if $\sum_iv_i$ divides $q-1$. 

Define the inner product of two functions $f,g :\GL_\v(\F_q)\rightarrow\kappa$ constant on conjugacy classes as

$$
\langle f,g\rangle=\langle f,g\rangle_{\GL_\v(\F_q)}:=\frac{1}{|\GL_\v(\F_q)|}\sum_{x\in\GL_\v(\F_q)}f(x)\overline{g(x)}.
$$
We denote by $1$ the identity character of $\GL_\v(\F_q)$ and by $1^{\alpha_\v}$ the twisted linear character given by 

$$
1^{\alpha_\v}(x)=\prod_{i=1}^r(\alpha_\v\circ{\rm det})(x_i),
$$
for $x=(x_1,\dots,x_r)\in\GL_\v(\F_q)$.

We have the following theorem.

\begin{theorem} Assume that the family $\{F_\v\}_\v$ is Log compatible. Then there exist  rational functions $V_\v(t), V_\v^{\rm gen}(t)\in\Q[t]$ such that for any finite field $\F_q$, we have $$V_\v(q)=\langle F_\v,1\rangle,$$
and for any finite field $\F_q$ and any character $\alpha_\v$ of $\Gm(\F_q)$ of order $\sum_iv_i$, we have 

$$
V_\v^{\rm gen}(q)=\langle F_\v,1^{\alpha_\v}\rangle.
$$
Moroever $V_\v$ and $V_\v^{\rm gen}$ are related as follows :
\begin{align*}
(t-1)\,\Log\left(\sum_{\muhat\in\calP^r}\frac{1}{Z_\muhat(t)}F_\muhat(t)\, T^{|\muhat|}\right)&=\Log\left(\sum_{\v\in(\N)^r}V_\v(t)\, T^\v\right)\\
&=\sum_{\v\in(\N)^r\backslash\{0\}}V_\v^{\rm gen}(t)\, T^\v,
\end{align*}
where $Z_\muhat(t)$ is the polynomial whose evaluation at $q$ is the cardinality of the centralizer $C_{\GL_\v(\F_q)}(u)$  of a unipotent element $u\in\GL_\v$ of type $\muhat$.
\label{Logcomp}\end{theorem}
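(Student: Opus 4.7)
The plan is to follow the Euler-product / plethystic-$\Log$ strategy of \cite{aha}. Parametrize $\GL_\v(\F_q)$-conjugacy classes by functions $h : \mathfrak{O} \to \calP^r$ with $\sum_\gamma |\gamma|\cdot |h(\gamma)| = \v$, and combine the centralizer formula $|C_{\GL_\v(\F_q)}(x_h)| = \prod_\gamma Z_{h(\gamma)}(q^{|\gamma|})$ with the Log-compatibility identity $F_{\mathrm{type}(h)}(q) = \prod_\gamma F_{h(\gamma)}(q^{|\gamma|})$ to obtain the Euler product
$$\sum_\v \langle F_\v, 1\rangle\, T^\v \;=\; \prod_{\gamma \in \mathfrak{O}} g\!\left(q^{|\gamma|}, T^{|\gamma|}\right), \qquad g(t, T) := \sum_\muhat \frac{F_\muhat(t)}{Z_\muhat(t)}\, T^{|\muhat|}.$$
Since $I_d(q) := |\{\gamma \in \mathfrak{O} : |\gamma|=d\}|$ is a polynomial in $q$, the $T^\v$-coefficient is polynomial in $q$; replacing $q$ by $t$ throughout defines $V_\v(t) \in \Q[t]$ with $V_\v(q) = \langle F_\v, 1\rangle$. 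The formal identity $\Log V = (t-1)\Log g$ then follows by taking $\log$ of the Euler product (yielding $\log V = \sum_d I_d(t)\psi_d(\log g)$), applying $\Psi^{-1}$, and reindexing $n = de$; the inner Möbius convolution $\frac{1}{n}\sum_{e \mid n} \mu(e)I_{n/e}(t^e)$ collapses to $\mu(n)(t-1)/n$ via the classical identity $d\,I_d(t) = \sum_{f \mid d}\mu(d/f)(t^f-1)$, so setting $V_\v^{\mathrm{gen}}(t) := [T^\v]\Log V(t,T)$ realizes the second and third expressions of the displayed formula.

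For the substantive claim $V_\v^{\mathrm{gen}}(q) = \langle F_\v, 1^{\alpha_\v}\rangle$, fix $\alpha := \alpha_\v$ of order exactly $n := \sum_i v_i$. Since $1^\alpha = \alpha\circ\det$ and $\det(x_h) = \prod_\gamma N_\gamma^{\|h(\gamma)\|}$ with $\|\muhat\| := \sum_i|\mu^i|$ and $N_\gamma := \prod_{a\in\gamma}a \in \F_q^\times$, the twisted Euler product is obtained from the untwisted one by substituting $T_i \mapsto \alpha(N_\gamma)^{-1}T_i$ inside each $\gamma$-factor. Taking $\log$ and collecting by orbit size reduces $\langle F_\v, 1^\alpha\rangle$ to an expression involving the character sums $\sigma(d,k) := \sum_{\gamma:|\gamma|=d}\alpha(N_\gamma)^{-k}$. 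Invoking the orthogonality identity
$$\sum_{a \in \F_{q^d}^\times}(\alpha^k \circ N_{\F_{q^d}/\F_q})(a) \;=\; (q^d - 1)\,\delta_{n \mid k},$$
which holds precisely because $\alpha$ has exact order $n$, and Möbius-inverting over $d$, one computes $\sigma(d,n/d) = \mu(d)(q-1)/d$; substituting back gives $\langle F_\v, 1^\alpha\rangle = (q-1)[T^\v]\Log g(q,T) = V_\v^{\mathrm{gen}}(q)$.

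The main obstacle is making this character-orthogonality reduction clean. The constraint $\sum_i v_i = n$ forces precisely the single character-power $k = n/d$ to appear in the orbit sum of size $d$; this is what makes the otherwise unwieldy Gauss-type sums collapse through Möbius inversion to the $\mu(d)/d$-coefficients that match the mono-degree $C_\omhat^o$-terms in the expansion of $\Log g$ provided by Lemma \ref{Logaha}. Without noticing that only this one value of $k$ contributes, one is left chasing a double Möbius inversion that does not visibly simplify.
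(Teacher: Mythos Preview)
Your approach is correct and takes a genuinely different route from the paper on the substantive point, the computation of $\langle F_\v,1^{\alpha_\v}\rangle$. The paper groups conjugacy classes by type $\omhat$, identifies the set of classes of a fixed type with a quotient by a wreath-product group $W(\omhat)\simeq\prod_{(d,\muhat)}(\Z/d\Z)^{\omhat(d,\muhat)}\rtimes\frak{G}_{\omhat(d,\muhat)}$, and then invokes \cite[Proposition~6.8.5]{letellier2} (a statement about sums of generic linear characters over $(Z_L)_{\rm reg}$) to obtain $\sum_C 1^{\alpha_\v}(C)=(q-1)C^o_\omhat$; Lemma~\ref{Logaha} then packages this into the $\Log$ identity. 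You instead keep the full twisted Euler product $\prod_\gamma g(q^{|\gamma|},\alpha(N_\gamma)^{-1}T^{|\gamma|})$ and compute the character sums $\sigma(d,k)$ directly via orthogonality on $\F_{q^d}^\times$. Your route is more self-contained (no external citation) and makes the role of the order-$n$ hypothesis on $\alpha_\v$ completely transparent; the paper's route is phrased so as to apply more readily to other generic characters. The untwisted half of your argument (the Euler product and the M\"obius collapse $\sum_{e\mid n}\frac{\mu(e)}{e}I_{n/e}(t^e)=\mu(n)(t-1)/n$, where your displayed convolution has a minor typo in the prefactor) is essentially the paper's, which cites \cite[Lemma~2.1.2]{aha2} for the same identity.

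One step in your sketch deserves to be made precise. The quantity $\langle F_\v,1^{\alpha_\v}\rangle$ is the coefficient of $T^\v$ in the twisted Euler product itself, \emph{not} in its logarithm; after writing $\log g=\sum_{\u}a_\u(t)T^\u$ one has
\[
\langle F_\v,1^{\alpha_\v}\rangle=[T^\v]\exp\Bigl(\sum_{d,\u}a_\u(q^d)\,\sigma(d,\|\u\|)\,T^{d\u}\Bigr),
\]
so the exponential contributes products $\prod_{j=1}^m\sigma(d_j,k_j)$ with $\sum_jd_jk_j=\|\v\|=n$. The reason only the linear term $m=1$ survives---your ``single character-power $k=n/d$'' claim---is the lemma that $\sigma(d,k)\neq 0$ forces $n\mid dk$: indeed $d\,\sigma(d,k)=\sum_{e\mid d}\mu(d/e)(q^e-1)\delta_{n\mid kd/e}$, and any nonzero summand has $n\mid kd/e\mid kd$. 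Combined with $d_jk_j\leq\sum_jd_jk_j=n$ this forces each $d_jk_j=n$, hence $m=1$ and $k_1=n/d_1$, after which your computation $\sigma(d,n/d)=\mu(d)(q-1)/d$ finishes the job. Stating this explicitly also exhibits directly why only mono-degree types carry nonzero $C^o_\omhat$ in Lemma~\ref{Logaha}.
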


\begin{proof} The existence of the rational function $V_\v(t)$ is clear from the assumptions on $F_\v$. Let us  prove the existence of $V_\v^{\rm gen}$ and that

$$
(t-1)\,\Log\left(\sum_{\muhat\in\calP^r}\frac{1}{Z_\muhat(t)}F_\muhat(t)\, T^{|\muhat|}\right)=\sum_{\v\in(\N)^r\backslash\{0\}}V_\v^{\rm gen}(t)\, T^\v.
$$

Fix $\v=(v_1,\dots,v_r)\neq 0$. Since $\GL_\v(\F_q)\rightarrow \N$, $g\mapsto |C_{\GL_\v(\F_q)}(g)|$ is Log compatible we have 

$$
\left\langle F_\v,1^{\alpha_\v}\right\rangle=\sum_{\omhat\in\T_\v}\frac{1}{Z_\omhat(q)}F_\omhat(q)\sum_C 1^{\alpha_\v}(C),
$$
where the last sum is over the set of conjugacy classes of $\GL_\v(\F_q)$ of type $\omhat$.

We are reduced to see that 

\beq
\sum_C 1^{\alpha_\v}(C)=(q-1)C_\omhat^o,
\label{om}\eeq
where the sum is over conjugacy classes of a given type $\omhat$. Indeed we get what we want by setting 

$$
V_\v^{\rm gen}(t)=\sum_{\omhat\in\T_\v}C_\omhat^o\frac{1}{Z_\omhat(t)}F_\omhat(t),
$$
and by using Lemma \ref{Logaha}.

The proof of the additive analog of (\ref{om}) is contained in the proof of \cite[Proposition 2.10]{aha3}. For the convenience of the reader we sketch it here.
We embed $\GL_\v$ in $\GL_N$ with $N=\sum_iv_i$. For $\omhat\in\T_\v$, we put $\omega:\N\times\calP\rightarrow\N$ with $\omega(d,\mu)=\sum_{\muhat,\,\mu=\cup\muhat}\omhat(d,\muhat)$ where $\cup\muhat=\mu^1\cup\mu^2\cup\cdots\cup\mu^r$ for any $\muhat\in\calP^r$. If $C$ is a conjugacy class of $\GL_\v(\F_q)$ of type $\omhat$, then the $\GL_N(\F_q)$-conjugacy class of $C$ is of type $\omega\in\T_N$. Choose an element of $\GL_\v(\F_q)$ of type $\omhat$ with Jordan form $\sigma u$ where $\sigma$ is semisimple and $u$ is unipotent. Let $L$ be $C_{\GL_N}(\sigma)$. Note that $L$ may not be in $\GL_\v$ but its center $Z_L$ does belong to $\GL_\v$. We denote by $(Z_L)_{\rm reg}$ the subset of $z\in Z_L$ such that $C_{\GL_N}(z)=L$. The map which sends $z\in(Z_L)_{\rm reg}(\F_q)$ to the $\GL_\v(\F_q)$-conjugacy class of $zu$ is surjective on the set of conjugacy classes of $\GL_\v(\F_q)$ of type $\omhat$. The fibers of that map can be identified with 

$$
\{g\in \GL_\v(\F_q)\,|\,gLg^{-1}=L,\, gC_ug^{-1}=C_u\}/M,
$$
where $M=C_{\GL_\v}(u)$
and $C_u$ is the $M$-conjugacy class of $u$. This group be further computed and we find that it is isomorphic to 

$$
W(\omhat)=\prod_{(d,\muhat)}\left((\Z/d\Z)^{\omhat(d,\muhat)}\rtimes \frak{G}_{\omhat(d,\muhat)}\right),$$
where $\frak{G}_n$ denotes the symmetric group in $n$ letters.

We thus reduced have

\begin{align*}
\sum_C1^{\alpha_\v}(C)&=\frac{1}{|W(\omhat)|}\sum_{z\in (Z_L)_{\rm reg}(\F_q)}1^{\alpha_\v}(zu)\\
&=\frac{1}{|W(\omhat)|}\sum_{z\in (Z_L)_{\rm reg}(\F_q)}1^{\alpha_\v}(z),
\end{align*}
Now we regard $1^{\alpha_\v}$ as a character of $\GL_N(\F_q)$. Our choice of $\alpha_\v$ ensures that the character $1^{\alpha_\v}$ of $\GL_N(\F_q)$ is \emph{generic}, namely its restriction to the center $Z_{\GL_N}$ is trivial while its restriction to the center of any proper Levi subgroup of $\GL_N$ is non-trivial. Therefore we can use \cite[Proposition 6.8.5]{letellier2} to deduce the identity (\ref{om}).

Let us now prove  the first identity. Recall that $\phi_d(q)=\frac{1}{d}\sum_{r|d}\mu(r)(q^{d/r}-1)$ is the number of elements of $\frak{O}$ of size $d$. Thanks to \cite[Lemma 2.1.2]{aha2} we are reduced to prove the following identity

$$
\sum_\v\left\langle F_\v,1\right\rangle\, T^\v=\prod_{d=1}^\infty\left(\sum_{\muhat\in\calP^r}\frac{1}{Z_\muhat(q^d)}F_\muhat(q^d)T^{d\cdot|\muhat|}\right)^{\phi_d(q)},
$$
where $d\cdot|\muhat|=(d|\mu^1|,\dots,d|\mu^r|)$. The proof of this identity is formal and uses the parametrization of the set of all conjugacy classes of $\GL_\v(\F_q)$, where $\v$ runs over $(\N)^r$, by the set of all finitely supported maps $\frak{O}\rightarrow\calP^r$.
\end{proof}

\subsection{Example: Kac polynomials}\label{Kac}

Fix a non-zero vector dimension $\v=(v_1,\dots,v_r)$. Define $R_\v:\GL_\v(\F_q)\rightarrow\Q$ by 

$$
R_\v(g):=\#\{\rho\in{\rm Rep}_{\F_q}(\Gamma,\v)\,|\, g\cdot\rho=\rho\}.
$$
This is the character of the representation of $\GL_\v(\F_q)$ on the $\Q$-vector space $\Q[{\rm Rep}_{\F_q}(\Gamma,\v)]$ generated by the elements of the finite set ${\rm Rep}_{\F_q}(\Gamma,\v)$.

We have the following theorem due to Hua \cite[Proof of Theorem 4.3]{hua}.

\begin{theorem} The family $\{R_\v\}_\v$ is Log compatible and for any $\muhat\in\calP^r$, we have 

$$
R_\muhat(t)=\prod_{i\rightarrow j\in\Omega}t^{\langle\mu^i,\mu^j\rangle},
$$
where for two partitions $\lambda$ and $\mu$,  

$$
\langle\lambda,\mu\rangle=\sum_{i,j}{\rm min}(i,j)m_i(\lambda)m_j(\mu),
$$
with $m_i(\lambda)$ the multiplicity of the part $i$ in the partition $\lambda$.
\end{theorem}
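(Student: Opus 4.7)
The starting observation is that $R_\v(g)$ has a clean interpretation: for $g=(g_i)_{i\in I}\in\GL_\v(\F_q)$, the condition $g\cdot\rho=\rho$ means that for each arrow $\gamma:i\to j$ in $\Omega$, the matrix $\rho_\gamma:\F_q^{v_i}\to\F_q^{v_j}$ satisfies $g_j\rho_\gamma=\rho_\gamma g_i$, i.e.\ $\rho_\gamma$ is an intertwiner between the $\F_q[t,t^{-1}]$-modules $(\F_q^{v_i},g_i)$ and $(\F_q^{v_j},g_j)$. Hence
$$R_\v(g)=\prod_{i\to j\in\Omega} q^{\dim_{\F_q}\Hom_{\F_q[t,t^{-1}]}((\F_q^{v_i},g_i),(\F_q^{v_j},g_j))}.$$
This already shows that $R_\v(g)$ depends only on the conjugacy classes of the $g_i$, and in fact only on the joint Jordan data.

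Next I would compute the exponent. Base changing to $\overline{\F}_q$ and decomposing along generalized eigenspaces gives an orthogonal decomposition
$$\Hom_{\overline{\F}_q[t,t^{-1}]}(V_i\otimes\overline{\F}_q,V_j\otimes\overline{\F}_q)=\bigoplus_{\alpha\in\overline{\F}_q^\times}\Hom_{\overline{\F}_q[t,t^{-1}]}((V_i)_\alpha,(V_j)_\alpha),$$
and the classical identity $\dim\Hom_{k[t]}(k[t]/(t-\alpha)^a,k[t]/(t-\alpha)^b)=\min(a,b)$ yields
$$\dim\Hom_{\overline{\F}_q[t,t^{-1}]}((V_i)_\alpha,(V_j)_\alpha)=\sum_{a,b}\min(a,b)\,m_a(\la^i_\alpha)m_b(\la^j_\alpha)=\langle\la^i_\alpha,\la^j_\alpha\rangle,$$
where $\la^i_\alpha$ is the Jordan partition of $g_i$ at eigenvalue $\alpha$. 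Taking Frobenius invariants (with $F$ permuting generalized eigenspaces so that $\dim_{\F_q}=\dim_{\overline{\F}_q}^F$), a Frobenius orbit $\gamma$ of size $d$ carrying partition data $h_i(\gamma)=\mu^i$ contributes $d\,\langle\mu^i,\mu^j\rangle$ to the $\F_q$-dimension for each arrow $i\to j$.

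Assembling these contributions, a class of type $\omhat\in\T_\v$ gives
$$R_\omhat(q)=\prod_{(d,\muhat)\in S_\omhat}\left(\prod_{i\to j\in\Omega}q^{d\langle\mu^i,\mu^j\rangle}\right)^{\omhat(d,\muhat)}=\prod_{(d,\muhat)\in S_\omhat}R_\muhat(q^d)^{\omhat(d,\muhat)},$$
where in the last step we recognize the contribution from a unipotent class of type $\muhat$ (a single orbit $\gamma=\{1\}$ of size one) as $R_\muhat(t)=\prod_{i\to j}t^{\langle\mu^i,\mu^j\rangle}$. This simultaneously establishes Log compatibility of $\{R_\v\}_\v$ and identifies $R_\muhat(t)$ with the asserted formula.

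The main delicate point is the Frobenius-invariants step: one must check that if $\gamma=\{\alpha_0,F\alpha_0,\dots,F^{d-1}\alpha_0\}$ is a single $F$-orbit of size $d$, then the $F$-fixed subspace of the $\overline{\F}_q$-vector space $\bigoplus_{\alpha\in\gamma}\Hom_{\overline{\F}_q[t,t^{-1}]}((V_i)_\alpha,(V_j)_\alpha)$ has $\F_q$-dimension equal to $d$ times the common $\overline{\F}_q$-dimension of any summand (equivalently, Galois descent from $\F_{q^d}$ to $\F_q$ preserves dimension after multiplying by $d$). Everything else is a straightforward unpacking of the definitions and a routine application of the Jordan normal form theorem over $\overline{\F}_q[t,t^{-1}]$.
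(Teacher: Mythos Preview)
Your argument is correct and is precisely Hua's computation, which the paper cites without reproducing a proof. One small simplification: the step you flag as delicate follows immediately from the fact that $\Hom$ over $\F_q[t,t^{-1}]$ commutes with the flat base change $\F_q\to\overline{\F}_q$, so $\dim_{\F_q}\Hom_{\F_q[t,t^{-1}]}(V_i,V_j)=\dim_{\overline{\F}_q}\Hom_{\overline{\F}_q[t,t^{-1}]}(V_i\otimes\overline{\F}_q,V_j\otimes\overline{\F}_q)$ directly, and the eigenvalue decomposition over $\overline{\F}_q$ (with each size-$d$ Frobenius orbit contributing $d$ identical summands of dimension $\langle\mu^i,\mu^j\rangle$) gives the result without a separate Galois-descent argument.
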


From Burnside theorem, the inner product $\langle R_\v,1\rangle$ is exactly the total number of isomorphism classes of ${\rm Rep}_{\F_q}(\Gamma,\v)$ and so from \cite{hua} we find that the polynomials $A_\v(t)\in\Q[t]$ defined by 

$$
\Log \left(\sum_\v\langle R_\v,1\rangle\, T^\v\right)=\sum_{\v>0}A_\v(q)\, T^\v
$$
are the so-called \emph{Kac polynomials}, namely, $A_\v(q)$ counts the number of isomorphism classes of the representations in ${\rm Rep}_{\F_q}(\Gamma,\v)$ that are indecomposable after extension of scalars from $\F_q$ to $\overline{\F}_q$. Such representations are called \emph{absolutely indecomposable}.

Hence from Theorem \ref{Logcomp}, we deduce the following proposition.

\begin{proposition}Assume that $\alpha_\v$ is a linear character of $\Gm(\F_q)$ of order $\sum_i v_i$. Then

$$
A_\v(q)=\langle F_\v,1^{\alpha_\v}\rangle.
$$
\label{Kacprop}\end{proposition}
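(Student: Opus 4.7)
The plan is to apply Theorem \ref{Logcomp} to the family $\{R_\v\}_\v$. Hua's theorem stated above tells us this family is Log compatible, with $R_\muhat(t) = \prod_{i\to j\in\Omega}t^{\langle\mu^i,\mu^j\rangle}$. Theorem \ref{Logcomp} then produces rational functions $V_\v(t), V_\v^{\rm gen}(t) \in \Q(t)$ satisfying $V_\v(q) = \langle R_\v, 1\rangle$ and, whenever a character $\alpha_\v$ of $\Gm(\F_q)$ of order $\sum_i v_i$ exists, $V_\v^{\rm gen}(q) = \langle R_\v, 1^{\alpha_\v}\rangle$; moreover these are linked by
$$
\Log\Bigl(\sum_\v V_\v(t)\, T^\v\Bigr) \;=\; \sum_{\v\neq 0} V_\v^{\rm gen}(t)\, T^\v.
$$

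Next, I invoke Burnside's theorem to identify $V_\v(q) = \langle R_\v, 1\rangle$ with the number of isomorphism classes of representations in $\Rep_{\F_q}(\Gamma,\v)$. By the definition of the Kac polynomials recalled in \S\ref{Kac}, this gives, for every prime power $q$,
$$
\Log\Bigl(\sum_\v V_\v(q)\, T^\v\Bigr) \;=\; \sum_{\v>0} A_\v(q)\, T^\v.
$$
The coefficient of $T^\v$ on the left is a rational function of $t$ whose evaluation at $q$ equals $A_\v(q)$ for infinitely many $q$; hence as an identity in $\Q(t)[[T]]$ we get $\Log(\sum_\v V_\v(t) T^\v) = \sum_{\v>0} A_\v(t)\, T^\v$.

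Combining this with the identity from Theorem \ref{Logcomp} displayed above yields $V_\v^{\rm gen}(t) = A_\v(t)$ for every $\v \neq 0$. Evaluating at any $q$ with $\sum_i v_i \mid q-1$ (so that $\alpha_\v$ exists) produces $A_\v(q) = \langle R_\v, 1^{\alpha_\v}\rangle$, which is the claim.

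There is no serious obstacle: Hua's theorem provides the Log compatibility hypothesis, Theorem \ref{Logcomp} does the heavy lifting by relating the untwisted and twisted inner products through $\Log$, and the definition of $A_\v$ precisely matches the shape produced by $\Log$ applied to the generating series of untwisted inner products. The only mildly delicate point is the passage from equality of rational functions at prime-power specializations of $t$ to equality of rational functions, which is automatic since infinitely many such specializations occur.
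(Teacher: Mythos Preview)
Your proof is correct and follows exactly the paper's approach: the proposition is deduced directly from Theorem \ref{Logcomp} applied to the Log compatible family $\{R_\v\}_\v$ (Hua's theorem), using Burnside to identify $\langle R_\v,1\rangle$ with the count of isomorphism classes and the defining identity $\Log(\sum_\v \langle R_\v,1\rangle\,T^\v)=\sum_{\v>0}A_\v(q)\,T^\v$. Your write-up merely spells out the comparison of the two $\Log$ identities and the rational-function argument that the paper leaves implicit.
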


\begin{remark}We can also prove directly this identity using the well-known fact that an element of ${\rm Rep}_{\F_q}(\Gamma,\v)$ is absolutely indecomposable if and only if the quotient of its stabilizer in $\GL_\v$ is isomorphic to $\overline{\F}_q^\times$. Indeed, in view of the genericity assumption on $\alpha_\v$, the restrictions of $\alpha_\v$ to stabilizers of representations that contains non-scalar semisimple elements will be non-trivial and therefore the inner product formula will vanish on these stabilizers. \end{remark}

\subsection{DT-invariants and the Steinberg character}

The DT-invariants of $\overline{\Gamma}$ without potential are defined from cohomological Hall algebra in \cite{kontsevich-soibelman2}. There are defined combinatorially as follows.

For $\v=(v_1,\dots,v_r)\in(\N)^r$, put 

$$
\delta(\v):=\sum_{i=1}^rv_i,\hspace{2cm} \overline{\gamma}(\v)=\sum_{i=1}^rv_i^2-\sum_{i\rightarrow j\in\overline{\Omega}} v_iv_j.
$$
Then the DT-invariants $\DT_\v(q)$ of $\overline{\Gamma}$ without potential are defined by

$$
(q-1)\Log\left(\sum_{\v\in(\N)^r}\frac{(-1)^{\delta(\v)}q^{-\frac{1}{2}(\overline{\gamma}(\v)+\delta(\v))}}{\prod_{i=1}^r(1-q^{-1})\cdots(1-q^{-v_i})}\, T^\v\right)=\sum_{\v\in(\N)^r\backslash\{0\}}(-1)^{\delta(\v)}\,\DT_\v(q)\, T^\v.
$$

We now recall some well-known fact about the so-called Steinberg character ${\rm St}_n:\GL_n(\F_q)\rightarrow\C$ of $\GL_n(\F_q)$. If $s\in\GL_n(\F_q)$ is a semisimple element, then there exist pairs $(d_1,n_1),\dots,(d_m,n_m)$ such that the centralizer $C_{\GL_n(\overline{\F}_q)}(s)$ is isomorphic to  $\prod_{i=1}^m\GL_{n_i}(\overline{\F}_q)^{d_i}$ and $C_{\GL_n(\F_q)}(s)\simeq\prod_{i=1}^m\GL_{n_i}(\F_{q^{d_i}})$. For an algebraic group $H$ defined over $\F_q$, we put $\epsilon_H=(-1)^{\F_q-\text{rank}(H)}$ (see  \cite[Chapter 8]{DM} for the definition of $\F_q$-rank). The $\F_q$-rank  of $C_{\GL_n(\overline{\F}_q)}(s)$ is  $\sum_in_i$. In particular the $\F_q$-rank of $\GL_\v(\F_q)$ is $\delta(\v)$.   

The Steinberg character of $\GL_n(\F_q)$ is then given by the following formula \cite[Corollary 9.3]{DM}

\beq
{\rm St}_n(g)=\begin{cases}\epsilon_{\GL_n}\epsilon_{C_{\GL_n}(g)}|C_{\GL_n(\F_q)}(g)|_p&\text{ if }g \text{ is semisimple},\\0&\text{ otherwise,}\end{cases}
\label{Stfor}\eeq
where for a finite group $H$, we denote by $|H|_p$ the $p$-part of $|H|$. In particular, ${\rm St}_n(1)=q^{\frac{n(n-1)}{2}}$.

The Steinberg character of $\GL_\v(\F_q)$ is ${\rm St}_\v=\boxtimes_{i=1}^r{\rm St}_{v_i}$. From Formula (\ref{Stfor}) we deduce the following proposition.

\begin{proposition} The family $\{(-1)^{\delta(\v)}{\rm St}_\v\}_\v$ is Log compatible.
\end{proposition}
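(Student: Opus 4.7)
The plan is to compute $(-1)^{\delta(\v)}{\rm St}_\v$ explicitly on a conjugacy class of a given type $\omhat\in\T_\v$ using formula (\ref{Stfor}), and verify directly that the answer factors in the multiplicative form required by Log compatibility. First, since ${\rm St}_\v = \boxtimes_i {\rm St}_{v_i}$ vanishes on any element whose semisimple part is non-trivial, it vanishes on every class of type $\omhat$ unless each multi-partition $\muhat=(\mu^1,\dots,\mu^r)$ in $S_\omhat$ satisfies $\mu^i=(1^{a_i(\muhat)})$ for all $i$. So the value depends only on the type, and all non-semisimple types give zero, which is consistent with the candidate
$$
F_\muhat(t)=\begin{cases}(-1)^{\sum_i a_i(\muhat)}\,t^{\sum_i\binom{a_i(\muhat)}{2}} & \text{if }\muhat=((1^{a_1}),\dots,(1^{a_r})),\\ 0 & \text{otherwise.}\end{cases}
$$

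Next, I would fix a semisimple $g=(g_1,\dots,g_r)\in\GL_\v(\F_q)$ of type $\omhat$ corresponding to a map $h:\mathfrak{O}\to\calP^r$ with $h(\gamma)=((1^{a_1(\gamma)}),\dots,(1^{a_r(\gamma)}))$, and use the standard decomposition
$$
C_{\GL_\v(\F_q)}(g)\;\cong\;\prod_{\gamma\in\mathfrak{O}}\prod_{i=1}^r \GL_{a_i(\gamma)}(\F_{q^{|\gamma|}}),
$$
noting that $\GL_n(\F_{q^d})$, viewed as an $\F_q$-algebraic group via Weil restriction, has $\F_q$-rank $n$ and $p$-part $q^{d\binom{n}{2}}$. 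Applying (\ref{Stfor}) factor by factor gives
$$
{\rm St}_\v(g)=\prod_{\gamma,i}(-1)^{(|\gamma|-1)a_i(\gamma)}\,q^{|\gamma|\binom{a_i(\gamma)}{2}},
$$
after combining $\epsilon_{\GL_{v_i}}=(-1)^{v_i}=\prod_\gamma(-1)^{|\gamma|a_i(\gamma)}$ and $\epsilon_{C_{\GL_{v_i}}(g_i)}=(-1)^{\sum_\gamma a_i(\gamma)}$.

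Multiplying by $(-1)^{\delta(\v)}=\prod_{i,\gamma}(-1)^{|\gamma|a_i(\gamma)}$, the exponent of $-1$ in each $(i,\gamma)$-factor becomes $(2|\gamma|-1)a_i(\gamma)\equiv a_i(\gamma)\pmod 2$, yielding
$$
(-1)^{\delta(\v)}{\rm St}_\v(g)=\prod_{\gamma\in\mathfrak{O}}\Bigl[(-1)^{\sum_i a_i(\gamma)}\,q^{|\gamma|\sum_i\binom{a_i(\gamma)}{2}}\Bigr],
$$
and regrouping the product over $\gamma$ according to its contribution $(|\gamma|,h(\gamma))=(d,\muhat)$ produces exactly $\prod_{(d,\muhat)\in S_\omhat}F_\muhat(q^d)^{\omhat(d,\muhat)}$, which is the defining identity of Log compatibility.

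Finally, I would verify the boundary condition that $F_\muhat(q)$ really equals the value of $(-1)^{\delta(|\muhat|)}{\rm St}_{|\muhat|}$ on a unipotent class of type $\muhat$: for $\muhat=((1^{v_1}),\dots,(1^{v_r}))$ the class is $\{e\}$ and we get $(-1)^{\sum_i v_i}q^{\sum_i\binom{v_i}{2}}$, matching the formula, while for any other $\muhat$ the element is non-identity unipotent and ${\rm St}_\v$ vanishes, matching $F_\muhat=0$. The main obstacle is purely bookkeeping: keeping track of the sign $\epsilon_H$ under Weil restriction (so that $\GL_n(\F_{q^d})$ contributes only $(-1)^n$, not $(-1)^{nd}$) and letting the $(-1)^{\delta(\v)}$ prefactor absorb the $(-1)^{|\gamma|}$ coming from $\epsilon_{\GL_{v_i}}$ so that the answer depends only on $d$ and $\muhat$, not on the ambient $\v$.
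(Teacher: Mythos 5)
Your proof is correct and takes exactly the approach the paper has in mind: the paper asserts the proposition as an immediate consequence of Formula~(\ref{Stfor}) without spelling out details, and you have supplied precisely the required bookkeeping — the factorization of the centralizer of a semisimple element into factors $\GL_{a_i(\gamma)}(\F_{q^{|\gamma|}})$, the correct computation of $\F_q$-rank under Weil restriction (rank $n$, not $nd$), and the observation that the prefactor $(-1)^{\delta(\v)}$ absorbs the $(-1)^{|\gamma|}$ so the answer depends only on $(d,\muhat)$. The verification that $F_\muhat$ really is the value of $(-1)^{\delta(|\muhat|)}{\rm St}_{|\muhat|}$ on unipotent classes of type $\muhat$ is the right closing check.
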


Let $R_\v$ be as in \S \ref{Kac}. The product of two Log compatible functions being Log compatible, we deduce that the family $\{(-1)^{\delta(\v)}{\rm St}_\v\otimes R_\v\}_\v$ is Log compatible and so by Theorem \ref{Logcomp} we deduce that

$$
(q-1)\Log\left(\sum_{\muhat\in\calP^r}\frac{(-1)^{\delta(|\muhat|)}}{Z_\muhat}R_\muhat(q)\, {\rm St}_\muhat(q)\, T^{|\muhat|}\right)=\sum_{\v>0}(-1)^{\delta(\v)}\langle R_\v,1^{\alpha_\v}\otimes {\rm St}_\v\rangle\, T^\v.
$$
Now by Formula (\ref{Stfor}) we have ${\rm St}_\muhat(q)=0$ unless the coordinates of $\muhat$ are all of the form $(1^{v_i})$ in which case ${\rm St}_\muhat(q)=q^{-\frac{1}{2}(\delta(\v)-\sum_iv_i^2)}$.

Hence

$$
\sum_{\muhat\in\calP^r}\frac{(-1)^{\delta(|\muhat|)}}{Z_\muhat}R_\muhat(q)\, {\rm St}_\muhat(q)=\sum_{\v\in(\N)^r}\frac{(-1)^{\delta(\v)}\,|{\rm Rep}_{\F_q}(\Gamma,\v)| q^{-\frac{1}{2}(\delta(\v)-\sum_iv_i^2)}}{|\GL_\v(\F_q)|}.
$$
Since 

$$
|\GL_\v(\F_q)|=q^{\sum_iv_i^2}\prod_{i=1}^r(1-q^{-1})(1-q^{-2})\cdots(1-q^{-v_i}),\hspace{1cm}  |{\rm Rep}_{\F_q}(\Gamma,\v)|=q^{\sum_{i\rightarrow j\in\Omega}v_iv_j}
$$we deduce that 

$$
\sum_{\muhat\in\calP^r}\frac{(-1)^{\delta(|\muhat|)}}{Z_\muhat}R_\muhat(q)\, {\rm St}_\muhat(q)=\sum_{\v\in(\N)^r}\frac{(-1)^{\delta(\v)}q^{-\frac{1}{2}(\overline{\gamma}(\v)+\delta(\v))}}{\prod_{i=1}^r(1-q^{-1})\cdots(1-q^{-v_i})}
$$
and so that 

\begin{proposition}

$$
\DT_\v(q)=\langle R_\v,1^{\alpha_\v}\otimes {\rm St}_\v\rangle.
$$

\label{DTprop}\end{proposition}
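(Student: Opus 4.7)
The plan is to apply Theorem \ref{Logcomp} to the class function family $\{(-1)^{\delta(\v)}{\rm St}_\v\otimes R_\v\}_\v$. This family is Log compatible because both factors are: the Steinberg part by the immediately preceding proposition, and the permutation character $R_\v$ by Hua's theorem. Since the product of two Log compatible families is Log compatible, Theorem \ref{Logcomp} yields the identity
$$(q-1)\Log\left(\sum_{\muhat\in\calP^r}\frac{(-1)^{\delta(|\muhat|)}}{Z_\muhat(q)}R_\muhat(q)\, {\rm St}_\muhat(q)\, T^{|\muhat|}\right)=\sum_{\v\in(\N)^r\backslash\{0\}}(-1)^{\delta(\v)}\langle R_\v,1^{\alpha_\v}\otimes {\rm St}_\v\rangle\, T^\v.$$

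The next step is to simplify the left-hand side using formula (\ref{Stfor}): since the Steinberg character vanishes on non-semisimple elements, only the unipotent types $\muhat$ whose associated unipotent element is the identity can contribute, i.e. those of the form $\muhat = 1^\v$ for some $\v\in(\N)^r$. For such a type, one has $Z_{1^\v}(q) = |\GL_\v(\F_q)|$, ${\rm St}_{1^\v}(q) = q^{\frac{1}{2}(\sum_i v_i^2 - \delta(\v))}$, and Hua's formula specialises to $R_{1^\v}(q) = \prod_{i\to j\in\Omega} q^{v_i v_j} = |{\rm Rep}_{\F_q}(\Gamma,\v)|$. Substituting these, the multi-partition sum collapses to a sum over dimension vectors $\v$.

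Finally, I would match the resulting series to the combinatorial definition of $\DT_\v(q)$ and read off the coefficient of $T^\v$. The main technical point is checking that the exponents combine correctly: the contribution from ${\rm St}_{1^\v}(q)$, $R_{1^\v}(q)$, and $|\GL_\v(\F_q)|^{-1}$ must assemble into $q^{-\frac{1}{2}(\overline{\gamma}(\v)+\delta(\v))}$. Using $\overline{\Omega}=\Omega\coprod\Omega^{\rm opp}$ one has $\overline{\gamma}(\v) = \sum_i v_i^2 - 2\sum_{i\to j\in\Omega}v_iv_j$, and the arithmetic then produces precisely the defining generating function of $\DT_\v(q)$. Applying $\Log$ on both sides and comparing the coefficient of $T^\v$ yields the claimed identity $\DT_\v(q) = \langle R_\v, 1^{\alpha_\v}\otimes {\rm St}_\v\rangle$.
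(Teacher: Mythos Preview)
Your argument is correct and follows exactly the same route as the paper: apply Theorem \ref{Logcomp} to the Log compatible family $\{(-1)^{\delta(\v)}{\rm St}_\v\otimes R_\v\}_\v$, use the vanishing of ${\rm St}_\muhat$ off the types $\muhat=1^\v$, and verify that the surviving terms assemble into the defining generating series for $\DT_\v$. The only cosmetic slip is the final sentence ``Applying $\Log$ on both sides'': the $\Log$ is already present on the left from Theorem \ref{Logcomp}, so once you have simplified the inner sum you simply recognize it as the series whose $(q-1)\Log$ defines $\DT_\v$ and compare coefficients.
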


\section{Cohomological interpretation of DT-invariants}

Assume from now that $\v$ is \emph{indivisible}, namely, the gcd of the coordinates of $\v$ equals $1$. Unless specified, the field $\K$ will be either $\C$ or an algebraic closure $\overline{\F}_q$ of a finite field $\F_q$. 

The letter $\ell $ will denote a prime different from the characteristic of $\K$, and the letter $\kappa$ will denote either $\C$ if $\K=\C$ or $\overline{\Q}_\ell$ if $\K=\overline{\F}_q$. For an  algebraic variety $X/_\K$ over $\K$, we denote by $H_c^i(X_\K,\kappa)$ the compactly supported  cohomology (this is $\ell$-adic cohomology if $\K=\overline{\F}_q$ and usual cohomology if $\K=\C$). By work of Deligne, the cohomology $H_c^i(X/_\K,\kappa)$ is endowed with a weight filtration. We say that $H_c^i(X/_\K,\kappa)$ is \emph{pure} if this weight filtration is trivial namely if :

(i) $\K=\C$ and $H_c^i(X/_\K,\kappa)$ has  a pure mixed Hodge structure of weight $i$ for all $i$. 

(ii) $\K=\overline{\F}_q$, $X/_\K$ is defined over $\F_q$ and for all $i$, the eigenvalues of the Frobenius acting on $H_c^i(X/_\K,\kappa)$  have absolute value equals to $q^{i/2}$.

If $X/_\K$ is equidimensional, we denote by $\IC X$ the intersection cohomology complex with coefficients in the constant sheaf  $\kappa$ as defined by Goresky-MacPherson-Deligne.   Say that $X/_\K$ is \emph{rationally smooth} if $\IC X$ is  the complex with $\kappa$ in degree $0$ and $0$ in other degrees. The compactly supported $i$-th intersection cohomology, denoted by $IH_c^i(X/_\K,\kappa)$, is defined as the $i$-th hypercohomology of $\IC X$. In particular, if $X/_\K$ is rationally smooth, we have $IH_c^i(X/_\K,\kappa)=H_c^i(X/_\K,\kappa)$. The cohomology groups $IH_c^i(X/_\K,\kappa)$ are also endowed with a weight filtration (arising from work of Saito if $(\K,\kappa)=(\C,\C)$ and from eigenvalues of Frobenius when $(\K,\kappa)=(\overline{\F}_q,\kappa)$ and $X/_\K$ is defined over $\F_q$). As for $H_c^i(X/_\K,\kappa)$, we say that $IH_c^i(X/_\K,\kappa)$ is pure if the weight filtration is trivial.
 
\subsection{Quiver varieties}\label{quiver}

Here we assume that the characteristic of $\K$ is either $0$ or  sufficiently large to be able to make generic choices of eigenvalues of adjoint orbits (the bound can be made explicit, see for instance \cite[Section 2.2]{aha}). 
Fix a non-zero vector dimension $\v\in(\N)^r$ and  consider the moment map $\mu_\v:{\rm Rep}_\K(\overline{\Gamma},\v)\rightarrow\gl_\v^0$, $(x_\gamma)_{\gamma\in\overline{\Omega}}\mapsto \sum_{\gamma\in\Omega}[x_\gamma,x_{\gamma^*}]$ where $\gl_\v^0:=\{X\in\gl_\v\,|\,{\rm Tr}(X):=\sum_i{\rm Tr}(X_i)=0\}$. 

Since $\v$ is indivisible and since the characteristic of $\K$ is either $0$ or sufficiently large it is possible to choose $\xihat=(\xi_i)_i\in\K^r$ such that $\xihat\cdot\v:=\sum_i\xi_iv_i=0$ and $\xihat\cdot\w\neq 0$ for all $\w\in(\N)^r$ satisfying $\w\neq\v$ and $0\leq w_i\leq v_i$ for all $i$ (see \cite[Section 4.1]{letellier2} for more details). Call such a choice of $\xihat$ a \emph{generic} choice.

Choose now a nilpotent orbit $\calO$ of $\gl_\v$ and consider the $\GL_\v$-orbit $C=\xihat+\calO$ of $\gl_\v$, where by notation abuse we still denote by $\xihat$ the central element of $\gl_\v$ whose $i$-th coordinate is the central matrix with eigenvalue $\xi_i$. The orbit $C$ satisfies the following property.

\begin{lemma} We have ${\rm Tr}(C)=0$ and the only graded subspaces $V\subset\K^\v$ stable by some $X\in C$ such that 

$$
{\rm Tr}(X|_V)=0
$$
are either $0$ or $\K^\v$.

\label{gen}\end{lemma}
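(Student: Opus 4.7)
The plan is to prove the two assertions separately, both of which follow quickly from the defining genericity of $\xihat$ combined with the fact that $\xihat$ acts as a scalar on each graded component of $\K^\v$.

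First I would verify that $\Tr(C)=0$. Any $X\in C$ can be written as $X=\xihat+N$ with $N\in\calO$ nilpotent, so $\Tr(N)=0$; meanwhile $\Tr(\xihat)=\sum_i v_i\xi_i=\xihat\cdot\v=0$ by the first genericity condition. Hence $\Tr(X)=0$.

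For the second assertion, let $V=\bigoplus_i V_i\subset\K^\v$ be a graded subspace with dimension vector $\w:=(\dim V_i)_i$, necessarily satisfying $0\leq w_i\leq v_i$, and suppose $V$ is stable under some $X=\xihat+N\in C$. The crucial observation is that $\xihat$ acts as the scalar $\xi_i$ on the $i$-th graded summand of $\K^\v$, so every graded subspace is automatically $\xihat$-stable. Consequently $V$ is also stable under $N=X-\xihat$. Since $N$ is nilpotent, so is $N|_V$, and therefore $\Tr(N|_V)=0$. It follows that
\[
\Tr(X|_V)=\Tr(\xihat|_V)+\Tr(N|_V)=\sum_i\xi_i w_i=\xihat\cdot\w.
\]
Imposing $\Tr(X|_V)=0$ thus forces $\xihat\cdot\w=0$, and the second genericity condition then forces $\w\in\{0,\v\}$, i.e.\ $V=0$ or $V=\K^\v$.

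No serious obstacle is anticipated here; the lemma is essentially an unpacking of the definition of a generic parameter. The only conceptual point worth isolating is that a graded subspace is automatically invariant under the block-scalar element $\xihat$, which is what collapses the stability hypothesis to a condition on the nilpotent part $N$ and reduces the trace formula to the pairing $\xihat\cdot\w$.
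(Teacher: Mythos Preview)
Your proof is correct. The paper states this lemma without proof, treating it as an immediate consequence of the genericity condition on $\xihat$ introduced just before the lemma; your argument is exactly the intended unpacking of that definition.
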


Define 

$$
\calQ_{\overline{C}}:=\mu_\v^{-1}(\overline{C})//\GL_\v={\rm Spec}\left(\K[\mu_\v^{-1}(\overline{C})]^{\GL_\v}\right).
$$
The action of $\GL_\v$ on $\mu_\v^{-1}(\overline{C})$ factorizes through an action of $G_\v:=\GL_\v/\bG_m$.

Denote by $C_i$ the component of $C$ in $\GL_{v_i}$ and denote by $\mu^i=(\mu^i_1,\mu^i_2,\dots,\mu^i_{s_i})$ the partition of $v_i$ given by the size of the Jordan blocks of $C_i$.

We now defined a new quiver $\Gamma_C$ together with a dimension vector $\v_C$ of $\Gamma_C$ as follows. At each vertex $i\in I$ of $\Gamma$ add a new leg of length equals to the length of $\mu^i$ minus $1$. For instance, if $\mu^i$ is of length $1$ i.e. $\mu^i=(v_i)$, then we do not add a leg and if $\mu^i$ is of maximal length, i.e., $\mu^i=(1^{v_i})$, we attach a leg of length $v_i-1$. We denote by $\Gamma_C$ the new quiver obtained from $\Gamma$ by adjoining these $r$ legs  and  where the arrows on the legs are oriented toward the vertices of $\Gamma$. Let $I_C$ denotes the set of vertices of $\Gamma_C$. We define $\v_C\in(\N)^{I_C}$ by putting the dimension vector $(v_i,v_i-\mu^i_1,v_i-\mu^i_1-\mu^i_2,\dots,\mu^i_{s_i})$ on the $i$-th leg. 

Define $\calQ_C$ as the image of $\mu^{-1}_\v(C)$ in $\calQ_{\overline{C}}$.

We have the following theorem.

\begin{theorem} The variety $\calQ_{\overline{C}}$ is not empty if and only if $\v_C$ is a root of $\Gamma_C$. Moreoever it is reduced to a single $\GL_\v$-orbits if and only if $\v_C$ is a real root.

Assume that $\calQ_{\overline{C}}$ is not empty, then

\noindent (i) the quotient map $\mu^{-1}_\v(\overline{C})\rightarrow\calQ_{\overline{C}}$ is a $G_\v$-principal bundle for the \'etale topology. In particular the $G_\v$-orbits of $\mu^{-1}_\v(\overline{C})$ are all closed of same dimension ${\rm dim}\,G_\v$.

\noindent (ii) $\calQ_C$ is a dense nonsingular open subset of $\calQ_{\overline{C}}$ (in particular it is non-empty),

\noindent (iii) the variety $\calQ_{\overline{C}}$ is irreducible of dimension

$$
2-{^t}\v_C C_{\Gamma_C}\v_C={\rm dim}\,{\rm Rep}_\K(\overline{\Gamma},\v)+{\rm dim}\, C-2\,{\rm dim}\, G_\v,
$$
where $C_{\Gamma_C}$ is the Cartan matrix of $\Gamma_C$.
\label{geoquiv}\end{theorem}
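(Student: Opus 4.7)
The plan is to reduce the whole theorem to Crawley-Boevey's foundational results on moment-map quiver varieties with generic semisimple parameters, via his standard leg-encoding of nilpotent orbit closures. The key reduction step is a $\GL_\v$-equivariant identification
$$
\calQ_{\overline{C}}\;\cong\;\mu_{\v_C}^{-1}(\xihat_C)\,/\!/\,\GL_{\v_C},
$$
where $\xihat_C$ is the central parameter on $\Gamma_C$ obtained by extending $\xihat$ with a suitably generic choice on each leg. This isomorphism comes from the standard fact that an element of $\overline{\calO}_i$ of nilpotent type at most $\mu^i$ is the same thing as a decreasing flag in $\K^{v_i}$ with the prescribed jumps $\mu^i_1,\mu^i_2,\dots$; concretely, composing the successive leg arrows at vertex $i$ of $\Gamma_C$ produces a nilpotent endomorphism whose Jordan type lies in $\overline{\calO}_i$, and modding out by the gauge groups attached to the leg exactly identifies $\GL_\v$-orbits on the two sides of the moment map.

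The next step is to check that $\xihat_C$ is generic in Crawley-Boevey's sense for $(\Gamma_C,\v_C)$, i.e.\ $\xihat_C\cdot\v_C=0$ but $\xihat_C\cdot\w\neq 0$ for every $0<\w<\v_C$. Any such subdimension $\w$ either restricts to a proper graded subspace $V\subset\K^\v$ stable by some $X\in C$, in which case Lemma \ref{gen} forces $\xihat_C\cdot\w\neq 0$, or $\w$ is supported nontrivially on the interior of some leg, where the parameter can be chosen generically among elements summing to $-\xi_i$ so that no partial sum vanishes. This is the delicate technical step and the main obstacle, since it is precisely here that Lemma \ref{gen} enters and must be translated into a statement about subrepresentations of the enlarged quiver rather than of $\Gamma$ itself.

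Once genericity of $\xihat_C$ is in hand, Crawley-Boevey's theorems directly supply all the assertions: the fiber $\mu_{\v_C}^{-1}(\xihat_C)$ is non-empty iff $\v_C$ is a positive root; every $\GL_{\v_C}$-orbit on it is closed (so the GIT quotient is an honest orbit space); the quotient map is a principal $G_{\v_C}$-bundle in the \'etale topology; the quotient is irreducible of dimension $2-{}^t\v_C C_{\Gamma_C}\v_C$; and it reduces to a single orbit iff $\v_C$ is a real root. Transporting these statements through the identification above yields (i) and (iii), as well as the non-emptiness and real-root characterizations.

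Finally, the alternative form of the dimension in (iii) follows from the combinatorial identity
$$
2-{}^t\v_C C_{\Gamma_C}\v_C \;=\; \dim\,\mathrm{Rep}_\K(\overline{\Gamma},\v) + \dim C - 2\dim G_\v,
$$
obtained by splitting the Cartan pairing of $\Gamma_C$ into $\Gamma$-contributions and leg-contributions and using the standard formula $\dim C = \sum_i(v_i^2 - \sum_k (\mu^i_k)^2)$. For (ii), the open subset $\mu_\v^{-1}(C)\subset\mu_\v^{-1}(\overline{C})$ corresponds, under the leg encoding, to the locus on which every leg arrow has maximal rank; this is a $\GL_{\v_C}$-stable open subset of strict codimension, and its image $\calQ_C$ inherits non-singularity from the \'etale-local principal bundle structure together with the smoothness of $\mu_\v^{-1}(C)$ at points where the moment map has surjective differential, which is automatic from the genericity of $\xihat_C$.
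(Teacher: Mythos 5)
Your overall strategy is the same as the paper's: identify $\calQ_{\overline{C}}$ with the Crawley--Boevey quiver variety $\mu_{\v_C}^{-1}(\xihat_C)/\!/\GL_{\v_C}$ attached to the leg-extended quiver $\Gamma_C$, and then read off all assertions from the Crawley--Boevey/Nakajima theory of moment-map quiver varieties. The paper's proof is exactly this one paragraph of reduction, outsourcing details to \cite[p.~1414 and \S4]{letellier2}, so in outline you match it.

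However, the step you yourself flag as the delicate one --- genericity of $\xihat_C$ --- is handled incorrectly. You claim the parameter on the interior of leg $i$ ``can be chosen generically among elements summing to $-\xi_i$ so that no partial sum vanishes.'' This is not available: the leg parameter is \emph{determined} by the eigenvalue data of the orbit closure being encoded. Here $C_i = \xi_i + \calO_{\mu^i}$ has a \emph{single} eigenvalue $\xi_i$ (all Jordan blocks sit on the same eigenvalue), so the eigenvalue differences governing the interior leg parameters are all zero. In particular the partial sums \emph{do} vanish, and $\xihat_C$ does \emph{not} satisfy the naive Crawley--Boevey genericity condition $\xihat_C\cdot\w\neq 0$ for every $0<\w<\v_C$; you cannot perturb it without changing the variety from $\calQ_{\overline{C}}$ to a quiver variety for a different (semisimple) orbit. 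What actually closes the gap --- and what \cite{letellier2} and Crawley--Boevey's treatment of the Deligne--Simpson problem use --- is the structural constraint coming from the moment-map equations on the legs: on $\mu_{\v_C}^{-1}(\xihat_C)$ the leg arrows are forced to assemble into a flag of injections into $\K^{v_i}$, so a subrepresentation of dimension $\w$ always has $\w_I$-component giving a graded subspace $V\subset\K^\v$ stable under $\mu_\v(\rho)\in\overline{C}$ with ${\rm Tr}(\mu_\v(\rho)|_V)=\xihat_C\cdot\w=\xihat\cdot\w_I$. Lemma~\ref{gen} then kills every proper $V$, and the leg-interior-supported $\w$'s are excluded directly by the injectivity, not by parameter genericity. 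Without this replacement your proof of (i) and of the characterization of non-emptiness/irreducibility does not go through as written, since you are invoking Crawley--Boevey's theorems under a hypothesis that fails.
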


It is possible to identify  $\calQ_{\overline{C}}$ with a quiver variety $\mu^{-1}_{\v_C}(\xihat_C)//\GL_{\v_C}$ where $\mu_{\v_C}$ is the moment map for $\Gamma_C$ and where $\xihat_C\in \K^{I_C}$ is defined on each leg exactly as in \cite[page 1414]{letellier2}. We now prove this theorem using this identification together with the general theorems by Crawley-Boevey and Nakajima on quiver varieties of the form $\mu^{-1}_\v(\xihat)//\GL_\v$. See \cite[Section 4]{letellier2} and the reference therein for a summary of these general properties. 
\bigskip

Let us now construct a resolution of the variety $\calQ_{\overline{C}}$. 

For each $i\in I$, denote by $L_i$ the Levi subgroup $\prod_{j=1}^{p_i}\GL_{m_j}$ where $(m_1,m_2,\dots,m_{p_i})$ is the dual partition of $\mu^i$ and let $P_i$ be a parabolic subgroup of $\GL_{v_i}$ having $L_i$ as a Levi factor. Then $\bfP=P_1\times\cdots\times P_r$ is a parabolic subgroup of $\GL_\v=\GL_{v_1}\times\cdots\times\GL_{v_r}$ with Levi factor $\bfL=L_1\times\cdots\times L_r$. Denote by $\frak{u}_\bfP$ the Lie algebra of the unipotent radical of $\bfP$.

Define 

$$
\bV_{\bfL,\bfP,\xihat}:=\left\{(\rho,g\bfP)\in {\rm Rep}_\K(\overline{\Gamma},\v)\times (\GL_\v/\bfP)\,\left|\, g^{-1}\mu_v(\rho)g\in\xihat+\frak{u}_\bfP\right\}\right..
$$

Note that the diagonal action of $\GL_\v$ on ${\rm Rep}_\K(\overline{\Gamma},\v)\times (\GL_\v/\bfP)$ preserves the relation and so $\GL_\v$ acts on $\bV_{\bfL,\bfP,\xihat}$. The projection on the first coordinate $\rho:\bV_{\bfL,\bfP,\xihat}\rightarrow\mu^{-1}_\v(\overline{C})$ is then a $\GL_\v$-equivariant projective map.

Say that an adjoint orbit of $\gl_\v$ is \emph{generic} if it satisfies Lemma \ref{gen} and choose a generic semisimple orbit $S$ of $\gl_\v$ such that the multiplicities of the eigenvalues of the $i$-th component are given by the parts of the dual partition of $\mu^i$ (since $\v$ is indivisible such an adjoint orbit $S$ always exist \cite[Section 2.2]{aha}). We denote then by $\calQ_S$ the quiver variety $\mu_\v^{-1}(S)//\GL_\v$. When $S$ is regular, then $\calQ_S$ was considered and denoted $\calQ_\tv$ in \cite[Section 2]{aha3}. The quiver varieties $\calQ_S$ are non-singular and irreducible of same dimension as $\calQ_{\overline{C}}$.

We have the following theorem.

\begin{theorem} (i) The geometric quotient $\bV_{\bfL,\bfP,\xihat}\rightarrow\Q_{\bfL,\bfP,\xihat}$ exists and is a principal $G_\v$-bundle for the \'etale topology. Moreover the diagram

$$\xymatrix{
\bV_{\bfL,\bfP,\xihat}\ar[rr]^{\rho}\ar[d]&&\mu_\v^{-1}(\overline{C})\ar[d]\\
\Q_{\bfL,\bfP,\xihat}\ar[rr]^{\rho/_{G_\v}}&&\calQ_{\overline{C}}}
$$
where the vertical arrows are the quotient maps is Cartesian.

\noindent (ii) The varieties $\bV_{\bfL,\bfP,\xihat}$ and $\Q_{\bfL,\bfP,\xihat}$ are both irreducible non-singular and the map $\rho/_{G_\v}$ is a resolution of $\calQ_{\overline{C}}$.

\noindent (iii) The quiver varieties $\Q_{\bfL,\bfP,\xihat}$ and $\calQ_S$ have isomorphic compactly supported cohomology. Moreover their cohomology is pure and vanishes in odd degree.

\noindent (iv) The compactly supported intersection cohomology $IH_c^i(\calQ_{\overline{C}},\kappa)$ is pure and vanishes for odd $i$.
\label{theogeo2}\end{theorem}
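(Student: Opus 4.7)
The plan is to treat the four assertions in turn, reducing all geometric input to the Crawley-Boevey and Nakajima theory of quiver varieties for the extended quiver $\Gamma_C$, and concluding (iv) with the decomposition theorem. For (i), I would note that the first projection $\rho:\bV_{\bfL,\bfP,\xihat}\to\mu_\v^{-1}(\overline{C})$ is projective, being the restriction of the projection from ${\rm Rep}_\K(\overline{\Gamma},\v)\times\GL_\v/\bfP$ to its first factor. Since Theorem \ref{geoquiv}(i) already presents $\mu_\v^{-1}(\overline{C})\to\calQ_{\overline{C}}$ as an \'etale $G_\v$-principal bundle, the $\GL_\v$-equivariance of $\rho$ forces stabilizers modulo the central $\mathbb{G}_m$ to be trivial on $\bV_{\bfL,\bfP,\xihat}$, so the geometric quotient $\Q_{\bfL,\bfP,\xihat}$ exists and represents the pullback of $\calQ_{\overline{C}}$ along $\rho/G_\v$. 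The Cartesian square then follows at once.

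For (ii), I would describe $\bV_{\bfL,\bfP,\xihat}$ via its second projection $\pi:\bV_{\bfL,\bfP,\xihat}\to\GL_\v/\bfP$, identifying it with the associated fibre bundle $\GL_\v\times_\bfP \mu_\v^{-1}(\xihat+\mathfrak{u}_\bfP)$. Using the leg-construction recalled above the theorem, both $\bV_{\bfL,\bfP,\xihat}$ and $\Q_{\bfL,\bfP,\xihat}$ can then be reinterpreted as Nakajima-type quiver varieties on the extended quiver $\Gamma_C$ with dimension vector $\v_C$ and with a moment-map parameter that is generic on $\Gamma_C$ precisely because $\xihat$ was chosen generic on $\Gamma$. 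Smoothness and irreducibility then follow from the general theorems summarized in \cite[Section 4]{letellier2}, as does the dimension match with $\calQ_{\overline{C}}$. To see that $\rho/G_\v$ is a resolution, I would check that over the dense open $\calQ_C\subset\calQ_{\overline{C}}$ the fibre is a single point: for each $X=\xihat+N$ with $N$ in the open Richardson orbit of $\bfP$, the set of $g\bfP$ with $g^{-1}Ng\in\mathfrak{u}_\bfP$ reduces to a point, which is the standard Springer statement for Richardson parabolics of type $A$.

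For (iii), both $\Q_{\bfL,\bfP,\xihat}$ and $\calQ_S$ are smooth quiver varieties on $\Gamma_C$ attached to generic moment-map parameters, and the set of generic parameters is connected. Nakajima's deformation argument, realized through a flat family of smooth quiver varieties over the generic locus whose compactly supported cohomology is constant, yields an isomorphism of cohomology groups; purity and vanishing in odd degree then follow from the known statements for $\calQ_S$, for instance via a $\mathbb{G}_m$-action with compact fixed locus inducing a decomposition into even-dimensional affine cells. Finally, (iv) is immediate from the decomposition theorem applied to the projective resolution $\rho/G_\v$: the direct image $R(\rho/G_\v)_*\kappa$ splits as a sum of shifted intersection cohomology complexes, one of whose summands is the intersection complex of $\calQ_{\overline{C}}$, so purity and even-degree concentration of $H_c^*(\Q_{\bfL,\bfP,\xihat},\kappa)$ transfer to $IH_c^*(\calQ_{\overline{C}},\kappa)$. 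The main obstacle I foresee is step (ii): producing a clean quiver-variety interpretation of $\bV_{\bfL,\bfP,\xihat}$ on $\Gamma_C$ compatible with the identification of $\calQ_{\overline{C}}$, and verifying genericity of the resulting parameter uniformly, so that the Crawley-Boevey and Nakajima machinery can be invoked simultaneously for both source and target of the resolution.
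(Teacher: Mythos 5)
Your overall strategy matches the paper's: interpret $\bV_{\bfL,\bfP,\xihat}$ and $\Q_{\bfL,\bfP,\xihat}$ as quiver varieties for the extended quiver $\Gamma_C$ with dimension vector $\v_C$, invoke the Crawley-Boevey/Nakajima theory summarised in \cite[Section 4]{letellier2}, and obtain (iv) from (ii) and (iii) via the decomposition theorem applied to the projective resolution $\rho/_{G_\v}$. The paper itself is terse here (it cites \cite[Sections 5.3, 7.3]{letellier2} and only spells out the deduction of (iv)), so your filled-in details for (i) and (ii) — projectivity of $\rho$, pullback of the $G_\v$-torsor from Theorem \ref{geoquiv}(i), identification of $\bV_{\bfL,\bfP,\xihat}$ with $\GL_\v\times_\bfP\mu_\v^{-1}(\xihat+\frak{u}_\bfP)$, and birationality over $\calQ_C$ via the Richardson property in type $A$ — are exactly the content of the cited reference.

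The one place where your argument is weaker than the literature's is step (iii). You assert that a flat family of smooth quiver varieties over the connected locus of generic parameters has constant compactly supported cohomology, but for non-proper families this is not automatic and is in fact the substance of the theorem. The standard justifications are either (a) over $\C$, hyperk\"ahler rotation exhibiting diffeomorphisms between quiver varieties at different generic parameters, or (b) over $\overline{\F}_q$, the arithmetic route used in \cite{aha,aha2,letellier2}: show both varieties have polynomial count, compute the counting polynomials by Fourier analysis on $\gl_{\v_C}(\F_q)$ and find they agree, and then deduce equality of Poincar\'e polynomials from purity. Since the paper needs the statement over both $\C$ and $\overline{\F}_q$, route (b) (or route (a) plus a base-change argument) is what actually appears; your ``deformation argument'' gestures in the right direction but as written does not constitute a proof. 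Everything else in your sketch is sound, and you correctly isolate the genericity-of-$\xihat_C$ check as the key technical point when passing to $\Gamma_C$.
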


This is an easy generalization of the case where the quiver $\Gamma$ consists of one vertex and $g$ loops which case is treated   in \cite[Sections 5.3, 7.3]{letellier2}. Note however that (iv) is a consequence of (ii) and (iii). Indeed, from (ii) we get that $IH_c^i(\calQ_{\overline{C}},\kappa)$ is a direct summand of $H_c^i(\Q_{\bfL,\bfP,\xihat},\kappa)$ as a mixed structure, hence the purity for $\calQ_{\overline{C}}$ follows from that of $\Q_{\bfL,\bfP,\xihat}$.
\bigskip

Now write $C_\muhat$ instead of $C$ and denote by $1^\v\in\calP_\v$ the multi-partition $\muhat$ with $\mu^i=(1^{v_i})$ for all $i=1,\dots,r$, and by $\v^1\in\calP_\v$ the multi-partition $\muhat$ with $\mu^i=(v_i)$ for all $i$. Then $C_{\v^1}=\xihat+C'$ where $C'$ is the nilpotent regular orbit of $\gl_\v$ and $C_{1^\v}$ consists of the single element $\xihat$. Denote by $\bfT$ the maximal torus of elements of $\GL_\v$ whose coordinates are diagonal matrices and denote by $\bfB$ the Borel subgroup of $\GL_\v$ consisting of upper triangular matrices. Consider the inclusion $i:\mu_\v^{-1}(\overline{C}_{\v^1})\rightarrow{\rm Rep}_\C(\overline{\Gamma},\v)\times\overline{C}_{\v^1}$, $\rho\mapsto (\rho,\mu_\v(\rho))$, and the Springer resolution 

$$
{\rm Sp}:\mathbb{X}_{\bfT,\bfB,\xihat}:=\{(x,g\bfB)\in\GL_\v\times(\GL_\v/\bfB)\,|\, g^{-1}xg\in\xihat+\frak{u}_\bfB\}\rightarrow\overline{C}_{\v^1},
$$
defined as  $(x,g\bfB)\mapsto x$.

We have the following Cartesian diagram

$$
\xymatrix{{\rm Rep}_\K(\overline{\Gamma},\v)\times\mathbb{X}_{\bfT,\bfB,\xihat}\ar[rr]^{{\rm Id}\times {\rm Sp}}&&{\rm Rep}_\K(\overline{\Gamma},\v)\times\overline{C}_{\v^1}\\
\bV_{\bfT,\bfB,\xihat}\ar[d]\ar[u]\ar[rr]^{\rho}&&\mu_\v^{-1}(\overline{C}_{\v^1})\ar[d]\ar[u]^i\\
\Q_{\bfT,\bfB,\xihat}\ar[rr]^{\rho/_{G_\v}}&&\calQ_{\overline{C}_{\v^1}}}
$$

We denote by $\pIC X$ the simple perverse sheaf on $X$ obtained by shifting by ${\rm dim} \,X$ the complex $\IC X$. If $X$ is non-singular, we denote by $\underline{\kappa}$ the constant perverse sheaf with $\kappa$ in degree $-{\rm dim}\,X$ and $0$ in other degrees.

Put $\frak{G}_\v=\prod_{i=1}^r\frak{G}_{v_i}$. Then $\frak{G}_\v$ is naturally identified with the Weyl group $N_{\GL_\v}(\bfT)/\bfT$ of $\GL_\v$. We parametrize the irreducible characters of $\frak{G}_\v$ by $\calP_\v$ in such a way that the identity character corresponds to $\v^1$. Now it follows from Springer's theory (see \cite[Section 6.4]{letellier2} for a review) that

$$
({\rm Id}\times {\rm Sp})_*(\underline{\kappa})\simeq \bigoplus_{\muhat\in\calP_\v}A_\muhat\otimes\pIC {{\rm Rep}_\K(\overline{\Gamma},\v)\times\overline{C}_\muhat},
$$
where $A_\muhat$ is an irreducible representation of $\frak{G}_\v$ corresponding to $\muhat$.
We can prove the following proposition as in \cite[Section 5.4]{letellier2}.

\begin{proposition}The restriction (pull back) of $\IC {{\rm Rep}_\K(\overline{\Gamma},\v)\times\overline{C}_\muhat}$ to $\mu_\v^{-1}(\overline{C}_\muhat)$ along the map $i$  is $\IC {\mu_\v^{-1}(\overline{C}_\muhat)}$.
\label{restriction}\end{proposition}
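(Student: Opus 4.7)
The plan is to reduce the statement to a pullback identity for $\IC{\overline{C}_\muhat}$ along the restricted moment map, and then to verify this identity using flatness. Since $V:={\rm Rep}_\K(\overline{\Gamma},\v)$ is an affine space, the IC complex of the product factors as an external tensor product $\IC{V\times\overline{C}_\muhat}\simeq \IC V\boxtimes \IC{\overline{C}_\muhat}$, with $\IC V$ the (shifted) constant sheaf. The closed immersion $i$ is by definition the graph of the restricted moment map $\mu_\v\colon \mu_\v^{-1}(\overline{C}_\muhat)\to \overline{C}_\muhat$, and pullback of a box product along a graph reduces to pullback of the second factor along the graphed morphism. The proposition is therefore equivalent to the identity $\mu_\v^*\IC{\overline{C}_\muhat}\simeq \IC{\mu_\v^{-1}(\overline{C}_\muhat)}$ (up to the appropriate cohomological shift).

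The key technical step is to prove that $\mu_\v\colon \mu_\v^{-1}(\overline{C}_\muhat)\to \overline{C}_\muhat$ is flat. By Theorem~\ref{geoquiv}(iii), the source has dimension $\dim V+\dim C_\muhat-\dim G_\v$, which is exactly $\dim\overline{C}_\muhat+\dim V-\dim\gl_\v^0$, so all fibers have the expected constant dimension $\dim V-\dim\gl_\v^0$. Since the target $\overline{C}_\muhat$ is a translate of a type-$A$ nilpotent orbit closure, it is normal and Cohen-Macaulay by Kraft-Procesi, and the source is a complete intersection of the expected codimension in the smooth space $V$, hence also Cohen-Macaulay; miracle flatness in its Cohen-Macaulay form then yields flatness. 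Moreover, over the open stratum $C_\muhat\subset\overline{C}_\muhat$ the moment map is even smooth: by $\GL_\v$-equivariance it suffices to check smoothness at one point, and Lemma~\ref{gen} forces every representation in $\mu_\v^{-1}(C_\muhat)$ to have trivial $G_\v$-stabilizer, which makes the differential of $\mu_\v$ surjective. Consequently $\mu_\v^*\IC{\overline{C}_\muhat}$ agrees with $\IC{\mu_\v^{-1}(\overline{C}_\muhat)}$ on the smooth open subset $\mu_\v^{-1}(C_\muhat)$.

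I expect the main obstacle to be the extension of this equality from the open stratum to the whole variety, since flat pullback does not automatically preserve intersection cohomology complexes. To handle this, one works \'etale-locally via $\GL_\v$-equivariant transverse slices: near a point of a boundary orbit $C_{\muhat'}\subset \overline{C}_\muhat$, the orbit closure is \'etale-locally a product of $C_{\muhat'}$ with a transverse slice, and after pullback by $\mu_\v$ the orbit direction contributes only a smooth factor (again using trivial stabilizers over the generic locus). The characterizing support and cosupport bounds for the target IC then reduce to the analogous bounds on the pulled-back transverse slice, which can be checked directly. This is precisely the local argument carried out for the one-vertex case in \cite[Section 5.4]{letellier2}; since the whole analysis is $\GL_\v$-equivariant and local on $V$, it transfers verbatim to the multi-vertex setting considered here.
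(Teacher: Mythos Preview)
Your reduction to the identity $\mu_\v^*\IC{\overline{C}_\muhat}\simeq\IC{\mu_\v^{-1}(\overline{C}_\muhat)}$ (up to shift) via the box-product and graph argument is correct, and this is indeed the heart of the matter. The paper itself gives no proof here beyond the reference to \cite[Section~5.4]{letellier2}, and that reference proceeds exactly through this reduction.

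However, your middle step contains genuine gaps. First, the claim that $\mu_\v^{-1}(\overline{C}_\muhat)$ is a complete intersection in $V$ is unsupported: the codimension of $\mu_\v^{-1}(\overline{C}_\muhat)$ in $V$ equals the codimension of $\overline{C}_\muhat$ in $\gl_\v^0$, and nilpotent orbit closures are essentially never complete intersections. Second, the version of miracle flatness you invoke requires the base to be \emph{regular}, not merely Cohen--Macaulay; since $\overline{C}_\muhat$ is singular along its boundary, your flatness argument does not go through as written. Finally, even if you had flatness, you correctly observe that it does not by itself force preservation of IC, and your last paragraph only gestures at what remains.

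The fix is that you are working too hard. You already noticed that over the open stratum $C_\muhat$ the moment map is \emph{smooth} because stabilisers are trivial; the point you are missing is that this holds over \emph{all} of $\overline{C}_\muhat$, not just the open orbit. Indeed, Theorem~\ref{geoquiv}(i) says that $G_\v$ acts freely on the whole of $\mu_\v^{-1}(\overline{C}_\muhat)$ (it is a principal bundle over the quotient), so every point there has trivial $G_\v$-stabiliser, hence the differential of $\mu_\v\colon V\to\gl_\v^0$ is surjective at every such point. Thus $\mu_\v$ is smooth on an open neighbourhood of $\mu_\v^{-1}(\overline{C}_\muhat)$ in $V$, and smooth pullback preserves intersection cohomology complexes up to shift. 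No flatness tricks, no stratumwise slice analysis, are needed; this is essentially how the cited argument in \cite[Section~5.4]{letellier2} runs.
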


Moroever the codimension of $\mu_\v^{-1}(C_\muhat)$ in $\mu_\v^{-1}(\overline{C}_{\v^1})$ equals the codimension of $C_\muhat$ in $\overline{C}_{\v^1}$. Therefore we can use the above Cartesian diagram to see that 

$$
(\rho/_{G_\v})_*(\underline{\kappa})\simeq \bigoplus_{\muhat\in\calP_\v}A_\muhat\otimes\pIC {\calQ_{\overline{C}_\muhat}}.
$$
Taking hypercohomology we find an isomorphism

$$
H_c^i(\Q_{\bfT,\bfB,\xihat},\kappa)\simeq\bigoplus_{\muhat\in\calP_\v}A_\muhat\otimes IH_c^{i-\delta_\muhat}(\calQ_{\overline{C}_\muhat},\kappa),
$$
where $\delta_\muhat$ denotes the codimension of $\calQ_{\overline{C}_\muhat}$ in $\calQ_{\overline{C}_{\v^1}}$. The group $\frak{G}_\v$ acts on each $H_c^i(\Q_{\bfT,\bfB,\xihat},\kappa)$ so that for each $\w\in\frak{G}_\v$ we have 

\beq
{\rm Tr}\,\left(\w, H_c^i(\Q_{\bfT,\bfB,\xihat},\kappa)\right)=\sum_{\muhat\in\calP_\v}{\rm Tr}\,(\w,A_\muhat)\,\,{\rm dim}\, IH_c^{i-\delta_\muhat}(\calQ_{\overline{C}_\muhat},\kappa).
\label{tracefor}\eeq
For an equidimensional complex  algebraic variety $X/_\K$ with vanishing odd intersection cohomology we put

$$
P_c(X;t):=\sum_i{\rm dim}\, IH_c^{2i}(X,\kappa)\, t^i.
$$

Denote by $\rho^i$ the character of the representation of $\frak{G}_\v$ on $H_c^i(\Q_{\bfT,\bfB,\xihat},\kappa)$ defined above and by $\chi^\muhat$ the character of $A_\muhat$. From Formula (\ref{tracefor}) we deduce the following one.

\begin{proposition}

$$
P_c(\calQ_{\overline{C}_\muhat};t)=t^{-\frac{1}{2}\delta\muhat}\sum_i\langle\rho^{2i},\chi^\muhat\rangle_{\frak{G}_\v}t^i.
$$
\label{graded}\end{proposition}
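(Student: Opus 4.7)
The plan is to derive Proposition \ref{graded} directly from the trace formula (\ref{tracefor}) by taking the character inner product with $\chi^\muhat$ and exploiting the parity vanishing of intersection cohomology provided by Theorem \ref{theogeo2}.

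First, I would fix an element $\muhat\in\calP_\v$ and multiply both sides of (\ref{tracefor}) by $\overline{\chi^\muhat(\w)}$, then average over $\w\in\frak{G}_\v$. Since the $A_\nuhat$ are pairwise non-isomorphic irreducible $\frak{G}_\v$-representations, orthogonality of characters collapses the right-hand sum to the single term indexed by $\muhat$, giving
\begin{equation*}
\left\langle \rho^{i},\chi^\muhat\right\rangle_{\frak{G}_\v}={\rm dim}\, IH_c^{i-\delta_\muhat}(\calQ_{\overline{C}_\muhat},\kappa).
\end{equation*}

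Next, I would multiply this identity by $t^{i/2}$ and sum over $i$. By Theorem \ref{theogeo2}(iii) the cohomology $H_c^{\bullet}(\Q_{\bfT,\bfB,\xihat},\kappa)$ vanishes in odd degree, so only even $i$ contribute on the left; similarly Theorem \ref{theogeo2}(iv) forces $IH_c^{i-\delta_\muhat}(\calQ_{\overline{C}_\muhat},\kappa)$ to vanish unless $i-\delta_\muhat$ is even, which also implies that $\delta_\muhat$ must be even whenever the right-hand side is non-zero. A change of variables $i\mapsto i+\delta_\muhat$ on the right then yields
\begin{equation*}
\sum_i\left\langle \rho^{2i},\chi^\muhat\right\rangle_{\frak{G}_\v}t^{i}=t^{\delta_\muhat/2}\sum_i{\rm dim}\,IH_c^{2i}(\calQ_{\overline{C}_\muhat},\kappa)\,t^{i}=t^{\delta_\muhat/2}P_c(\calQ_{\overline{C}_\muhat};t),
\end{equation*}
and dividing through by $t^{\delta_\muhat/2}$ gives the claimed formula.

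There is no real obstacle here: every nontrivial ingredient (the decomposition of $({\rm Id}\times{\rm Sp})_*(\underline{\kappa})$, the pull-back identity in Proposition \ref{restriction}, the Cartesian diagram, the purity and odd-vanishing statements) has been used already in establishing (\ref{tracefor}) and Theorem \ref{theogeo2}. The only point requiring attention is bookkeeping: one must verify that $\delta_\muhat$ is even so that the factor $t^{\delta_\muhat/2}$ genuinely is an integer power of $t^{1/2}$ and the reindexing respects the definition of $P_c$. This is automatic since $\calQ_{\overline{C}_\muhat}$ and $\calQ_{\overline{C}_{\v^1}}$ are both even-dimensional (their dimensions are computed by Theorem \ref{geoquiv}(iii)), so their codimension difference is even.
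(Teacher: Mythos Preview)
Your argument is correct and is exactly what the paper has in mind: the text simply states that Proposition~\ref{graded} is deduced ``From Formula~(\ref{tracefor})'', and your proposal spells out this deduction by taking the inner product with $\chi^\muhat$, using orthogonality of irreducible characters, and reindexing after invoking the odd-vanishing from Theorem~\ref{theogeo2}. The parity check on $\delta_\muhat$ that you include is the only point one might worry about, and your justification via Theorem~\ref{geoquiv}(iii) (both $\dim{\rm Rep}_\K(\overline{\Gamma},\v)$ and $\dim C_\muhat$ are even) is correct.
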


\subsection{Fourier transforms, unipotent characters}

\subsubsection{Unipotent characters}\label{unipotent}

To alleviate the notation, put $G:=\GL_n(\F_q)$ and let $B\subset G$ be the Borel subgroup of upper triangular matrices and let $\kappa[G/B]$ be the $\kappa$-vector space with basis $G/B=\{gB\,|\,g\in G\}$. The group $G$ acts on $\kappa[G/B]$ by left multiplication. Let us denote by ${\rm Ind}_B^G(1):G\rightarrow\kappa, g\mapsto {\rm Trace}\,\left(g\,|\,\kappa[G/B]\right)$ the character of the representation of $G$ on $\kappa[G/B]$. The decomposition of ${\rm Ind}_B^G(1)$ as a sum of irreducible characters of $G$ reads

$$
{\rm Ind}_B^G(1)=\sum_{\chi\in {\rm Irr}\,\frak{G}_n}\chi(1)\cdot \calU_\chi.
$$
The irreducible characters $\{\calU_\chi\}_\chi$ are called the \emph{unipotent} characters of $G$. The character $\calU_1$ is the trivial character of $G$ and $\calU_\epsilon$, where $\epsilon$ is the sign character of $\frak{G}_n$, is the Steinberg character of $G$. For a partition $\mu$ of $n$, we put 

$$
\calU_\mu:=\calU_{\chi^\mu}
$$
so that the $\calU_{(1^n)}$ is the Steinberg character and $\calU_{(n^1)}$ is the trivial character.

The unipotent characters of $\GL_\v(\F_q)$, with  $\muhat=(\mu^1,\dots,\mu^r)\in\calP_\v$, are then $\calU_\muhat:=\calU_{\mu^1}\boxtimes\cdots\boxtimes\calU_{\mu^r}.$

\subsubsection{Fourier transforms of nilpotent orbits}\label{fourier}

In this section $\K=\overline{\F}_q$.

For a multi-partition $\muhat\in\calP_\v$, we denote by $\calO_\muhat$ the corresponding nilpotent orbit of $\gl_\v$. Define the characteristic function $\bfX_{\overline{\calO}_\muhat}:\gl_\v(\F_q)\rightarrow\Z$ of $\IC {\overline{\calO}_\muhat}$ as

$$
\bfX_{\overline{\calO}_\muhat}(x)=\sum_i{\rm dim}\,\calH_x^{2i}(\IC {\overline{\calO}_\muhat})\, q^i,
$$
(recall that $\IC {\overline{\calO}_\muhat}$ has vanishing odd cohomology). Since $\calO_{\v^1}$ is regular, its Zariski closure is rationally smooth and so  $\bfX_{\IC {\overline{\calO}_{\v^1}}}$ is the  function $\eta_o:\gl_\v(\F_q)\rightarrow\Z$ that takes the value $1$ on nilpotent elements and $0$ elsewhere.

Fix a non-trivial additive character $\psi:\F_q\rightarrow\kappa^\times$, and for $x,y$ in $\gl_\v$, put $\langle x,y\rangle:={\rm Tr}\,(xy)$. Note that the form $\langle\,,\,\rangle$ is non-degenerate. Denote by $\calC(\gl_\v)$ the $\kappa$-vector space of all functions $\gl_\v(\F_q)\rightarrow\kappa$, and define the Fourier transform $\calF:\calC(\gl_\v)\rightarrow\calC(\gl_\v)$ as 

$$
\calF(f)(x)=\sum_{y\in\gl_\v(\F_q)}\psi({\rm Tr}(xy))f(y),
$$
for $f\in\calC(\gl_\v)$ and $x\in\gl_\v(\F_q)$.

The values of the two class functions $\calU_\muhat$ and $\calF(\bfX_{\overline{\calO}_\muhat})$ depend only on the types of the orbits and we have the precise relationship between the two \cite[Formula (6.6.1), Theorem 6.7.1]{letellier2}.

\begin{theorem} Let $C\subset\GL_\v(\F_q)$ and $\calO\subset\gl_\v(\F_q)$ be any $\GL_\v(\F_q)$-orbits of same type. Then 

$$
\calU_\muhat(C)=q^{-\frac{1}{2}{\rm dim}\,\calO_{\muhat'}}\calF(\bfX_{\overline{\calO}_{\muhat'}})(\calO),
$$
where $\muhat'$ denotes the dual multi-partition of $\muhat$.
\label{Fourier-uni}\end{theorem}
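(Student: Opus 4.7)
The plan is to reduce to $\GL_n$ (everything factors through the decomposition $\GL_\v = \prod_i \GL_{v_i}$ and both sides factor accordingly) and then to prove the $\GL_n$ version via Springer theory combined with the Fourier--Deligne transform. The overall strategy mirrors Lusztig's character sheaf computation and its explicit form for type $A$.

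First, I would recall Springer's construction. Let $\pi:\widetilde{\mathcal{N}}\to\mathcal{N}$ be the Springer resolution of the nilpotent cone of $\gl_n$. The pushforward $\pi_*\underline{\kappa}$ is a semisimple complex whose perverse decomposition reads
\[
\pi_*\underline{\kappa}\;\simeq\;\bigoplus_{\mu\in\calP_n} A_\mu\otimes \pIC{\overline{\calO}_\mu},
\]
where $A_\mu$ is the irreducible $\frak{G}_n$-representation indexed by $\mu$ (with the convention fixed earlier in the paper, where $(n)$ corresponds to the trivial representation). Taking characteristic functions of Frobenius over $\F_q$, the left-hand side is (up to a shift) a Green function, while the right-hand side becomes a sum $\sum_\mu \chi^\mu(1)\bfX_{\overline{\calO}_\mu}$ after restriction to $\mathcal{N}(\F_q)$.

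Second, I would apply the Fourier--Deligne transform $\calF$. The key input is Lusztig's theorem: $\calF$ permutes Springer sheaves in a controlled way, and for $\GL_n$ the involution on partitions induced by $\calF$ is exactly the transposition $\mu\mapsto\mu'$. More precisely, for semisimple-plus-nilpotent orbits $\calO\subset\gl_n$ of the same type as a conjugacy class $C\subset\GL_n$, the value $\calF(\bfX_{\overline{\calO}_{\mu'}})(\calO)$ can be computed as a sum of products of Green functions evaluated on the semisimple and unipotent parts via Harish-Chandra/Deligne--Lusztig induction. On the other side, the Jordan decomposition of unipotent characters gives
\[
\calU_\mu(C)\;=\;\calU_\mu(su)\;=\;\sum_{\text{something}}\epsilon\cdot (\text{Green function values on }u),
\]
and the two expressions are matched by Lusztig's identification of unipotent characters as the characteristic functions of unipotent character sheaves, which are obtained precisely by Fourier-transforming Springer sheaves. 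This is a clean statement for $\GL_n$ because all unipotent characters are cuspidal-free and parametrized by partitions.

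Third, I would track the normalizations. The shift by $\dim$ in the definition of $\pIC$ and the standard fact $\dim\calO_\mu+\dim\calO_{\mu'}=n(n-1)$ account for the factor $q^{-\frac{1}{2}\dim\calO_{\mu'}}$, once one also uses $\dim\calF(K)=\dim K$ and the normalization of $\calF$ chosen via the pairing $\langle x,y\rangle=\Tr(xy)$. Finally, the product structure $\calU_\muhat=\boxtimes_i\calU_{\mu^i}$ together with the factorization of $\calF$ across the direct sum $\gl_\v=\bigoplus_i\gl_{v_i}$ upgrades the $\GL_n$ identity to the multi-partition statement.

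The main obstacle is the explicit identification, at the level of characteristic functions, of the Fourier transform of $\pIC{\overline{\calO}_{\mu'}}$ with the unipotent character $\calU_\mu$; this is not formal and requires either Lusztig's classification of character sheaves on $\gl_n$ together with the computation of the Frobenius action on stalks, or an inductive argument using the compatibility of $\calF$ with parabolic induction (which in type $A$ becomes Harish-Chandra induction from Levi subgroups $\GL_{n_1}\times\cdots\times\GL_{n_k}$). Tracking the various shifts, Tate twists, and the swap $\muhat\leftrightarrow\muhat'$ through this machinery is the delicate bookkeeping on which the proof rests.
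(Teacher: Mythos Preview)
The paper does not actually prove this theorem: it is quoted directly from \cite[Formula (6.6.1), Theorem 6.7.1]{letellier2}, with no argument given here. So there is no ``paper's own proof'' to compare against in this document; the comparison is really with the proof in \cite{letellier2}.

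Your outline is the correct strategy and matches what is done in \cite{letellier2}: reduce to a single $\GL_n$ by multiplicativity, use the Springer decomposition of $\pi_*\underline{\kappa}$, and invoke Lusztig's identification of unipotent character sheaves on $\GL_n$ with Fourier--Deligne transforms of the $\pIC{\overline{\calO}_\mu}$, the transposition $\mu\leftrightarrow\mu'$ arising from Lusztig's computation of how the Fourier transform acts on Springer sheaves in type $A$. You also correctly flag that the nontrivial content lies in this last identification, which is not formal.

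One genuine slip in your bookkeeping: the identity $\dim\calO_\mu+\dim\calO_{\mu'}=n(n-1)$ is false in general (take $n=3$, $\mu=(2,1)=\mu'$, where both orbits have dimension $4$). The factor $q^{-\frac{1}{2}\dim\calO_{\mu'}}$ does not come from such a relation; it comes directly from the shift by $\dim\overline{\calO}_{\mu'}$ in the definition of the simple perverse sheaf $\pIC{\overline{\calO}_{\mu'}}$, together with the fact that the unnormalized $\calF$ used here (no Tate twist, no global $q$-power prefactor) sends the constant sheaf on a linear subspace to the constant sheaf on its orthogonal. When you pass to characteristic functions, the shift contributes exactly $q^{\frac{1}{2}\dim\calO_{\mu'}}$ on the right-hand side, which is then moved to the left. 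Fixing this normalization argument would make your sketch a faithful summary of the proof in \cite{letellier2}.
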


\subsection{Cohomological interpretation of multiplicities}

Consider the generic adjoint orbits $C_\muhat$ over $\K$ as defined in \S \ref{quiver} ($\K$ can be either $\C$ or an algebraic closure of a finite field). Recall (see introduction or proof of Proposition \ref{multieq} below) that for each $\muhat\in\calP_\v$, there exists a polynomial $R_\v^\muhat(t)\in\Q[t]$ such that for any finite field $\F_q$ and any linear character $\alpha_\v:\mathbb{G}_m(\F_q)\rightarrow\kappa^\times$ of order $\sum_iv_i$, the evaluation $R_\v^\muhat(q)$ equals the multiplicity $\langle R_\v,1^{\alpha_\v}\otimes\calU_\muhat\rangle$ of the irreducible character $1^{\alpha_\v}\otimes\calU_\muhat$ of $\GL_\v(\F_q)$ in the big representation $R_\v$.

 We prove the following theorem.

\begin{theorem} For any $\muhat\in\calP_\v$ we have

$$
R_\v^\muhat(t)=t^{-\frac{1}{2}{\rm dim}\,\calQ_{\overline{C}_{\muhat'}}}P_c(\calQ_{\overline{C}_{\muhat'}}/_\K,t).
$$

\label{maintheo}\end{theorem}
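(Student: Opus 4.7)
The plan is to transfer the character-theoretic multiplicity to the Lie algebra via Fourier transform, identify the resulting sum with an intersection-cohomology-weighted point count of $\calQ_{\overline{C}_{\muhat'}}$, and then conclude by purity. We work over $\K=\overline{\F}_q$ and use the Grothendieck-Lefschetz trace formula; by standard spreading-out and base-change arguments (the assumption that $\v$ is indivisible ensures the generic choice of $\xihat$ behaves well), the resulting polynomial identity in $q$ then gives the claim for arbitrary $\K$.

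First I would unwind the multiplicity
\[
R_\v^\muhat(q)=\frac{1}{|\GL_\v(\F_q)|}\sum_{g\in\GL_\v(\F_q)}R_\v(g)\,\overline{1^{\alpha_\v}(g)\calU_\muhat(g)}
\]
using the explicit form $R_\v(g)=\#\{\rho\in\mathrm{Rep}_{\F_q}(\Gamma,\v)\,|\,g\cdot\rho=\rho\}$. The next step is to apply Theorem \ref{Fourier-uni} to rewrite $\calU_\muhat$ on semisimple-unipotent decompositions as $q^{-\frac{1}{2}\dim\calO_{\muhat'}}\calF(\bfX_{\overline{\calO}_{\muhat'}})$ of the corresponding Lie-algebra orbit, and then use Fourier inversion together with the observation that the moment map $\mu_\v$ is the differential of the $\GL_\v$-action on $\mathrm{Rep}_\K(\overline{\Gamma},\v)={\rm Rep}_\K(\Gamma,\v)\oplus{\rm Rep}_\K(\Gamma,\v)^*$. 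This is the argument sketched in \cite[Section 6.10]{letellier2}; it converts the sum over $g$ into a sum over $(\rho,X)\in{\rm Rep}_{\F_q}(\overline{\Gamma},\v)\times C_{\muhat'}$ with $\mu_\v(\rho)=X$, twisted by $\bfX_{\overline{\calO}_{\muhat'}}$ along the nilpotent part, and weighted by the generic character $1^{\alpha_\v}$.

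Second, the generic condition on $1^{\alpha_\v}$ (the same genericity used in Theorem \ref{Logcomp} and Proposition \ref{Kacprop}) together with genericity of $\xihat$ from Lemma \ref{gen} collapses the twisted orbit sum and identifies the expression with
\[
R_\v^\muhat(q)=q^{-\frac{1}{2}\dim\calQ_{\overline{C}_{\muhat'}}}\sum_{x\in\calQ_{\overline{C}_{\muhat'}}(\F_q)}\bfX_{\IC{\calQ_{\overline{C}_{\muhat'}}}}(x),
\]
where the restriction of $\IC{\mathrm{Rep}_\K(\overline{\Gamma},\v)\times\overline{C}_{\muhat'}}$ to $\mu_\v^{-1}(\overline{C}_{\muhat'})$ descends to $\pIC{\calQ_{\overline{C}_{\muhat'}}}$ by Proposition \ref{restriction} combined with the principal $G_\v$-bundle structure from Theorem \ref{geoquiv}(i). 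The normalizing factor $q^{-\frac{1}{2}\dim\calQ_{\overline{C}_{\muhat'}}}$ collects the powers of $q$ coming from $|\GL_\v(\F_q)|$, $|{\rm Rep}_{\F_q}(\overline{\Gamma},\v)|$, $\dim\calO_{\muhat'}$, and the dimension formula in Theorem \ref{geoquiv}(iii). This identification of the Fourier-theoretic character sum with the IC-weighted point count is the step that requires the most care.

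Finally, by the Grothendieck-Lefschetz trace formula
\[
\sum_{x\in\calQ_{\overline{C}_{\muhat'}}(\F_q)}\bfX_{\IC{\calQ_{\overline{C}_{\muhat'}}}}(x)=\sum_i(-1)^i\mathrm{Tr}\bigl(F,IH_c^i(\calQ_{\overline{C}_{\muhat'}},\overline{\Q}_\ell)\bigr).
\]
Theorem \ref{theogeo2}(iv) asserts that $IH_c^i$ vanishes in odd degrees and is pure of weight $i$ in even degrees, so all Frobenius eigenvalues on $IH_c^{2i}$ have absolute value $q^i$; combined with the fact that $R_\v^\muhat(q)$ is already known to be a polynomial in $q$, the eigenvalues are forced to equal $q^i$ and the trace formula reduces to $P_c(\calQ_{\overline{C}_{\muhat'}};q)$. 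This yields the desired equality in $t=q$, and since the equality holds for infinitely many $q$ it is an equality of polynomials in $t$.
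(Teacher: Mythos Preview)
Your proposal is correct and follows essentially the same route as the paper: transfer the multiplicity to the Lie algebra via Theorem~\ref{Fourier-uni} and the genericity of $\alpha_\v$ and $\xihat$, identify the result with the IC-weighted point count on $\calQ_{\overline{C}_{\muhat'}}$ using Proposition~\ref{restriction} and Theorem~\ref{geoquiv}, and conclude by purity (Theorem~\ref{theogeo2}(iv)). The paper factors this into two explicit pieces --- Proposition~\ref{multieq} carries out the $\GL_\v\to\gl_\v$ passage by decomposing both inner products into types and invoking Formula~(\ref{om}) together with its additive analogue (this is the precise mechanism behind your ``collapsing'' step), and Theorem~\ref{Theta} then relates the Lie-algebra inner product $\langle\Theta_\v,1^{\xihat_\v}\otimes\mathcal{N}_{\muhat'}\rangle_{\gl_\v}$ to $P_c$ via Hausel's point-count formula --- but the underlying argument is the same.
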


If $\Gamma$ consists of one vertex with $g$ loops, then the theorem is exactly \cite[Theorem 7.3.3]{letellier2} with $k=1$ and $\calX_1=1^{\alpha_\v}\otimes\calU_\muhat$. The proof for any $\Gamma$ goes exactly along the same lines. Since we are working with unipotent characters, the proof simplifies and we sketch part of it for the convenience of the reader.

We need to prove an intermediate result. Given an element $X\in\gl_\v(\F_q)$ and $\rho\in{\rm Rep}_{\F_q}(\Gamma,\v)$, define $[X,\rho]\in {\rm Rep}_{\F_q}(\Gamma,\v)$ by 

$$
[X,\rho]_{i\rightarrow j}:=X_j\rho_{i\rightarrow j}-\rho_{i\rightarrow j}X_i,
$$
for any $i\rightarrow j\in\Omega$ and consider the function $\Theta_\v:\gl_\v(\F_q)\rightarrow\kappa$ given by 

$$
\Theta_\v(X):=\#\left\{\rho\in{\rm Rep}_{\F_q}(\Gamma,\v)\,\left|\,[X,\rho]=0\right.\right\}.
$$
Assume given a generic element $\xihat=(\xi_1,\dots,\xi_r)\in (\F_q)^r$ and denote by $1_\xihat:\gl_\v(\F_q)\rightarrow\kappa$ the characteristic function that takes the value one at $\xihat\in\gl_\v(\F_q)$ and the value $0$ elsewhere. Let $1^{\xihat_\v}$ denote the linear character $\calF(1_\xihat)$ of the abelian group $\gl_\v(\F_q)$. For two class functions $h_1,h_2$ on $\gl_\v(\F_q)$, define 

$$
\langle h_1,h_2\rangle_{\gl_\v}:=\frac{1}{|\GL_\v(\F_q)|}\sum_{x\in\gl_\v(\F_q)}h_1(x)\overline{h_2(x)}.
$$
To alleviate the notation put 

$$
\mathcal{N}_\muhat:=\calF(\bfX_{\IC {\overline{\calO}_{\muhat}}}).
$$

\begin{proposition} We have the following identity:

$$
\left\langle R_\v,1^{\alpha_\v}\otimes\calU_\muhat\right\rangle_{\GL_\v}=(1-q^{-1})\,q^{-\frac{1}{2}{\rm dim}\, C_{\muhat'}}\left\langle \Theta_\v, 1^{\xihat_\v}\otimes\mathcal{N}_{\muhat'}\right\rangle_{\gl_\v}.
$$
\label{multieq}\end{proposition}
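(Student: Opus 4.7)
The strategy is to expand both inner products by grouping the sum over $\GL_\v(\F_q)$ (resp.\ $\gl_\v(\F_q)$) by the combinatorial type $\omhat\in\T_\v$ of the conjugacy class (resp.\ adjoint orbit), and to match the two expansions term-by-term using Theorem \ref{Fourier-uni}. On the group side, a straightforward regrouping gives
\begin{equation*}
\langle R_\v,1^{\alpha_\v}\otimes\calU_\muhat\rangle_{\GL_\v}=\sum_{\omhat\in\T_\v}\frac{R_\omhat(q)\,\overline{\calU_\muhat(\omhat)}}{Z_\omhat(q)}\sum_{C\text{ of type }\omhat}\overline{1^{\alpha_\v}(C)},
\end{equation*}
where $R_\omhat(q)$ and $\calU_\muhat(\omhat)$ denote the common values on any conjugacy class of type $\omhat$. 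Identity (\ref{om}) from the proof of Theorem \ref{Logcomp} evaluates the inner sum as $(q-1)\,C_\omhat^o$, using that $C_\omhat^o\in\Q$ so the conjugate sum is the same.

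The same regrouping on the Lie algebra side produces the analogous expression with $\Theta_\omhat(q)$, $\overline{\mathcal{N}_{\muhat'}(\omhat)}$ and the sum $\sum_{\calO\text{ of type }\omhat}\overline{1^{\xihat_\v}(\calO)}$ in place of the corresponding group-theoretic data. The additive analogue of (\ref{om}), established in the proof of \cite[Proposition 2.10]{aha3}, evaluates this last sum as $q\,C_\omhat^o$; the difference between the factors $q-1$ and $q$ reflects the passage from the multiplicative center $\F_q^\times$ to the additive center $\F_q$ of the relevant Levi, and their ratio $(q-1)/q=1-q^{-1}$ is precisely the scalar appearing in the statement.

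Two remaining ingredients close the argument. First, Theorem \ref{Fourier-uni} gives $\overline{\calU_\muhat(\omhat)}=q^{-\frac{1}{2}{\rm dim}\,\calO_{\muhat'}}\,\overline{\mathcal{N}_{\muhat'}(\omhat)}$, and ${\rm dim}\,C_{\muhat'}={\rm dim}\,\calO_{\muhat'}$ because $C_{\muhat'}$ is a central translate of $\calO_{\muhat'}$. Second, one must verify the polynomial identity $R_\omhat(q)=\Theta_\omhat(q)$: for $g=su$ of type $\omhat$, decomposing $\overline{\F}_q^{\v}$ into generalized eigenspaces of $s$ reduces the count of $\rho\in{\rm Rep}_{\F_q}(\Gamma,\v)$ with $g\rho=\rho$ to a product, indexed by the Frobenius-orbit eigenvalue data, of counts of intertwiners commuting with the unipotent part on each block; the same analysis applied to $X=\sigma+\nu\in\gl_\v(\F_q)$ of the same type yields the identical polynomial, since both counts depend only on the Jordan data encoded in $\omhat$. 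Substituting these facts into the two expansions, both collapse to $(q-1)\,q^{-\frac{1}{2}{\rm dim}\,C_{\muhat'}}\sum_{\omhat}R_\omhat(q)\,C_\omhat^o\,Z_\omhat(q)^{-1}\,\overline{\mathcal{N}_{\muhat'}(\omhat)}$, which proves the identity. The main delicate point is the additive genericity identity for $1^{\xihat_\v}$ (the content cited from \cite{aha3}); once it is in place, the remaining manipulations are formal.
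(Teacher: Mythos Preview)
Your proof is correct and follows essentially the same route as the paper's own argument: expand both inner products by type $\omhat\in\T_\v$, invoke (\ref{om}) and its additive analogue $\sum_\calO 1^{\xihat_\v}(\calO)=q\,C_\omhat^o$ to handle the inner sums, and then match the remaining factors via Theorem~\ref{Fourier-uni} together with $R_\omhat(q)=\Theta_\omhat(q)$. You supply a few details the paper leaves tacit (the Jordan-block justification of $R_\omhat=\Theta_\omhat$, the equality $\dim C_{\muhat'}=\dim\calO_{\muhat'}$, and the observation that complex conjugation is harmless because the relevant values are rational), but the overall structure is identical.
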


\begin{proof}Denote by $\calU_\muhat^\omhat(q)$ the value of $\calU_\muhat$ at an element of $\GL_\v(\F_q)$ of type $\omhat\in\T_\v$. Then 

$$
\langle R_\v,1^{\alpha_\v}\otimes\calU_\muhat\rangle_{\GL_\v}=\sum_{\omhat\in\T_\v}\frac{1}{Z_\omhat(q)}R_\omhat(q)\calU_\muhat^\omhat(q)\sum_C1^{\alpha_\v}(C),
$$
where the second sum is over the conjugacy classes of type $\omhat$. By Formula (\ref{om}), we deduce that

$$
\langle R_\v,1^{\alpha_\v}\otimes\calU_\muhat\rangle_{\GL_\v}=(q-1)\sum_{\omhat\in\T_\v}C_\omhat^o\,\frac{1}{Z_\omhat(q)}R_\omhat(q)\calU_\muhat^\omhat(q).
$$
Denote by $\Theta_\omhat(q)$ and $\mathcal{N}_\muhat^\omhat(q)$ the respective value of $\Theta_\v$ and $\mathcal{N}_\muhat$ at an element of $\gl_\v(\F_q)$ of type $\omhat\in\T_\v$.  

Then

$$
\left\langle \Theta_\v,1^{\xihat_\v}\otimes\mathcal{N}_{\muhat'}\right\rangle_{\gl_\v}=\sum_{\omhat\in\T_\v}\frac{1}{Z_\omhat(q)}\Theta_\omhat(q)\mathcal{N}_{\muhat'}^\omhat(q)\sum_\calO1^{\xihat_\v}(\calO),
$$
where the second sum is over the adjoint orbits of $\gl_\v(\F_q)$ of type $\omhat$. The proposition follows thus from Theorem \ref{Fourier-uni}, the fact that $\Theta_\omhat(q)=R_\omhat(q)$ and the additive analogue of Formula (\ref{om}) which reads

$$
\sum_\calO1^{\xihat_\v}(\calO)=qC_\omhat^o.
$$

\end{proof}

Denote by $\Theta_\v^\muhat(t)\in\Q[t]$ the polynomial whose evaluation at $q$ equals $\left\langle \Theta_\v, 1^{\xihat_\v}\otimes\mathcal{N}_\muhat\right\rangle_{\gl_\v}$. Theorem \ref{maintheo} is now a consequence of the following theorem.

\begin{theorem} We have

$$
\Theta_\v^\muhat(t)=(t-1)^{-1}t^{{\rm dim}\,\GL_\v-{\rm dim}{\rm Rep}_\K(\Gamma,\v)}P_c(\calQ_{\overline{C}_\muhat}/_\K,t).
$$

\label{Theta}\end{theorem}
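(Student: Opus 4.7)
My plan is to compute $\langle \Theta_\v, 1^{\xihat_\v} \otimes \mathcal{N}_\muhat\rangle_{\gl_\v}$ by Fourier-transforming each factor on $\gl_\v(\F_q)$, using Grothendieck--Lefschetz to interpret the result as a Frobenius trace on intersection cohomology of $\mu_\v^{-1}(\overline{C}_\muhat)$, and then descending to $\calQ_{\overline{C}_\muhat}$ via the principal $G_\v$-bundle of Theorem \ref{geoquiv}(i).

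First I would compute the two Fourier transforms. Since $1^{\xihat_\v}(X)=\psi(\Tr(X\xihat))$, a direct substitution gives
$$1^{\xihat_\v}\cdot\mathcal{N}_\muhat=\calF\bigl(\bfX_{\IC{\overline{C}_\muhat}}\bigr),$$
where $\bfX_{\IC{\overline{C}_\muhat}}(Y):=\bfX_{\IC{\overline{\calO}_\muhat}}(Y-\xihat)$ is the IC characteristic function of the translate $\overline{C}_\muhat=\xihat+\overline{\calO}_\muhat$. For the other factor, the identity $\Tr(X[\rho_\gamma,\rho_{\gamma^*}])=\Tr([X,\rho_\gamma]\rho_{\gamma^*})$ decouples the sum over $\rho\in{\rm Rep}_{\F_q}(\overline{\Gamma},\v)$: first summing over the $\rho_{\gamma^*}$-components and using orthogonality of $\psi$ on each ${\rm Mat}_{v_j,v_i}(\F_q)$ turns the surviving condition $[X,\rho_\gamma]=0$ ($\gamma\in\Omega$) into an indicator, yielding
$$\sum_{\rho\in{\rm Rep}_{\F_q}(\overline{\Gamma},\v)}\psi\bigl(\Tr(X\mu_\v(\rho))\bigr)=q^{\dim{\rm Rep}_\K(\Gamma,\v)}\,\Theta_\v(X).$$
Setting $N_\v(Y):=|\mu_\v^{-1}(Y)|$ (extended by zero off the image of $\mu_\v$) this reads $\calF(N_\v)=q^{\dim{\rm Rep}_\K(\Gamma,\v)}\Theta_\v$, and since $N_\v$ is invariant under $Y\mapsto -Y$ (negate the $\gamma^*$-components of $\rho$), applying $\calF$ once more yields
$$\calF(\Theta_\v)=q^{\dim\GL_\v-\dim{\rm Rep}_\K(\Gamma,\v)}\,N_\v.$$

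Next I would unfold $\sum_X\Theta_\v(X)\overline{\calF(\bfX_{\IC{\overline{C}_\muhat}})(X)}$ by writing the conjugate Fourier as a sum in $Y$ and exchanging the order of summation, which together with the preceding identity collapses the inner product to
$$\sum_X\Theta_\v(X)\,\overline{(1^{\xihat_\v}\cdot\mathcal{N}_\muhat)(X)}=q^{\dim\GL_\v-\dim{\rm Rep}_\K(\Gamma,\v)}\sum_{\rho\in\mu_\v^{-1}(\overline{C}_\muhat)(\F_q)}\bfX_{\IC{\overline{C}_\muhat}}\bigl(\mu_\v(\rho)\bigr).$$
Because ${\rm Rep}_\K(\overline{\Gamma},\v)$ is smooth, $\IC{{\rm Rep}_\K(\overline{\Gamma},\v)\times\overline{C}_\muhat}=\underline{\kappa}\boxtimes\IC{\overline{C}_\muhat}$, so Proposition \ref{restriction} identifies the integrand with $\bfX_{\IC{\mu_\v^{-1}(\overline{C}_\muhat)}}(\rho)$, and the Grothendieck--Lefschetz trace formula rewrites the sum as $\sum_i(-1)^i\Tr(F,IH_c^i(\mu_\v^{-1}(\overline{C}_\muhat),\kappa))$.

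Finally, the étale principal $G_\v$-bundle $\pi:\mu_\v^{-1}(\overline{C}_\muhat)\to\calQ_{\overline{C}_\muhat}$ from Theorem \ref{geoquiv}(i), combined with Lang's theorem, ensures that $|\pi^{-1}(x)(\F_q)|=|G_\v(\F_q)|=|\GL_\v(\F_q)|/(q-1)$ for every $x\in\calQ_{\overline{C}_\muhat}(\F_q)$, and smooth base change gives $\pi^*\pIC{\calQ_{\overline{C}_\muhat}}\cong\pIC{\mu_\v^{-1}(\overline{C}_\muhat)}$ up to the appropriate relative-dimension shift; consequently
$$\sum_i(-1)^i\Tr\bigl(F,IH_c^i(\mu_\v^{-1}(\overline{C}_\muhat))\bigr)=|G_\v(\F_q)|\sum_i(-1)^i\Tr\bigl(F,IH_c^i(\calQ_{\overline{C}_\muhat})\bigr).$$
By Theorem \ref{theogeo2}(iv), $IH^*_c(\calQ_{\overline{C}_\muhat})$ is pure and vanishes in odd degree, so the right-hand side equals $|G_\v(\F_q)|P_c(\calQ_{\overline{C}_\muhat};q)$. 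Dividing by $|\GL_\v(\F_q)|$ and using $\dim\gl_\v=\dim\GL_\v$ delivers the formula of the theorem. The main obstacle is precisely this last descent: one must verify that the standard étale $G_\v$-bundle transfer produces \emph{exactly} the factor $|G_\v(\F_q)|$ at the level of Frobenius traces on intersection cohomology, which is the technical content carried out in \cite[\S 7.3]{letellier2} for a single vertex with $g$ loops and which extends without change to the present setting.
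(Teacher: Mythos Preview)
Your proposal is correct and follows essentially the same route as the paper's (implicit) general argument, which the paper defers to \cite[Proof of Theorem 7.3.3]{letellier2}: Fourier-transform both factors, use Proposition \ref{restriction} to identify the resulting sum with the Frobenius trace on $IH_c^\bullet(\mu_\v^{-1}(\overline{C}_\muhat))$, descend along the \'etale $G_\v$-bundle of Theorem \ref{geoquiv}(i), and invoke purity/odd-vanishing from Theorem \ref{theogeo2}(iv). The only difference is that the paper writes out in detail only the special case $\muhat=\v^1$, where rational smoothness lets one replace the IC characteristic function by the indicator $1_{\overline{C}}$ and quote Hausel's point-count formula directly, whereas you carry through the general $\muhat$ explicitly.
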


\begin{proof}If $\Gamma$ consists of only one vertex with $g$ loops, then the theorem is exactly the formula displayed in \cite[Proof of Theorem 7.3.3]{letellier2} with $k=1$. The proof for an arbitrary $\Gamma$ is completely similar. However we will still give the complete proof for the case $\muhat=1^\v$ as it is much simpler than the general case and also this is the case we are interested for DT-invariants.  The fact that the proof in this case is much simpler comes from the fact that the variety $\calQ_{\overline{C}_{\v^1}}$ is rationally smooth. Recall that $C_{\v^1}=\xihat+\calO_{\v^1}$. To alleviate the notation put $C=C_{\v^1}$, $\calO=\calO_{\v^1}$ and $\mathcal{N}=\mathcal{N}_{\v^1}$. By Hausel's formula (see \cite[Proposition 2.9]{aha3}, we have

$$
|\calQ_{\overline{C}}(\F_q)|=\frac{(q-1)|{\rm Rep}_{\F_q}(\Gamma,\v)|}{|\GL_\v(\F_q)|\cdot |\gl_\v(\F_q)|}\sum_{X\in\gl_\v(\F_q)}\Theta_\v(X)\,\calF(1_{\overline{C}})(X),
$$
where $1_{\overline{C}}\in\calC(\gl_\v)$ is the characteristic function of $\overline{C}$, i.e., it takes the value $1$ on $\overline{C}$ and $0$ elsewhere.  But $C=\xihat+\calO$. Therefore, we have $1_{\overline{C}}=1_{\xihat}*\eta_o$ where $\eta_o$ is the characteristic functions of nilpotent elements and $*$ the convolution product on functions on $\gl_\v(\F_q)$.  We thus have $\calF(1_{\overline{C}})=1^{\xihat_\v}\otimes\mathcal{N}$ and so

$$
|\calQ_{\overline{C}}(\F_q)|=\frac{(q-1)|{\rm Rep}_{\F_q}(\Gamma,\v)|}{|\gl_\v(\F_q)|}\left\langle\Theta_\v,1^{\xihat_\v}\otimes\mathcal{N}\right\rangle_{\gl_\v}.
$$

Since $\calQ_{\overline{C}}$ has polynomial count (over a finite field) and since it is cohomologically pure we deduce that 

$$
\sum_i{\rm dim}\,H_c^{2i}(\calQ_{\overline{C}}/_\K,\kappa)\, q^i=|\calQ_{\overline{C}}/_{\overline{\F}_q}(\F_q)|.
$$
We refer to Katz appendix in \cite{hausel-villegas} for details relating the cohomology of polynomial count varieties with their counting polynomial (see \cite[Section 3.3]{letellier2} for generalisation of Katz theorem to intersection cohomology which is needed for general $\muhat$). The variety $\calQ_{\overline{C}}$ is rationally smooth and so we have $IH_c^{i}(\calQ_{\overline{C}}/_\K,\kappa)=H_c^i(\calQ_{\overline{C}}/_\K,\kappa)$ from which we deduce the theorem for $\muhat=\v^1$. The rational smoothness of $\calQ_{\overline{C}}$ is a consequence of the rational smoothness of the Zariski closures of regular orbits together with the fact that the restriction of the intersection cohomology complex on ${\rm Rep}_\K(\overline{\Gamma},\v)\times\overline{C}_{\muhat}$ to $\mu_\v^{-1}(\overline{C}_{\muhat})$ is again an intersection cohomology complex (see Proposition \ref{restriction}).
\end{proof}

Putting $\muhat=\v^1$ in Theorem \ref{maintheo}  we obtain by Proposition \ref{Kacprop}

$$
A_\v(t)=t^{-\frac{1}{2}{\rm dim}\,\calQ_\xihat}\sum_i{\rm dim}\, H_c^{2i}(\calQ_\xihat,\kappa)\, t^i,
$$
where  $\calQ_\xihat=\mu_\v^{-1}(\xihat)//G_\v$. This formula was first proved by Crawley-Boevey and van den Bergh \cite{crawley-etal} by a completely different method (note that $IH_c^i(\calQ_\xihat,\kappa)=H_c^i(\calQ_\xihat,\kappa)$ as $\calQ_\xihat$ is non-singular).

We deduce the main result which was one of the main motivation for this paper.

\begin{corollary}We have

$$
\DT_\v(t)=t^{-\frac{1}{2}{\rm dim}\,\calQ_{\overline{C}_{\v^1}}}\sum_i{\rm dim}\, H_c^{2i}(\calQ_{\overline{C}_{\v^1}},\kappa)\, t^i.
$$
\end{corollary}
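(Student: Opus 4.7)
The plan is to assemble the Corollary directly from the ingredients established in the preceding sections: Proposition \ref{DTprop} converts the DT-invariant to a multiplicity, Theorem \ref{maintheo} converts the multiplicity to an intersection-cohomology Poincar\'e polynomial, and the rational smoothness statement established at the end of Theorem \ref{Theta} converts intersection cohomology to ordinary cohomology.

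First I would observe that under the parametrization of unipotent characters of $\GL_\v(\F_q)$ recalled in \S\ref{unipotent}, the Steinberg character ${\rm St}_\v$ is exactly $\calU_{1^\v}$, since for each $i$ the Steinberg character of $\GL_{v_i}(\F_q)$ corresponds to the sign character $\chi^{(1^{v_i})}$. Thus Proposition \ref{DTprop} can be rewritten as
$$\DT_\v(q)=\left\langle R_\v,\,1^{\alpha_\v}\otimes\calU_{1^\v}\right\rangle_{\GL_\v},$$
which is precisely the evaluation $R_\v^{1^\v}(q)$ of the polynomial introduced just before Theorem \ref{maintheo}.

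Next I would apply Theorem \ref{maintheo} with $\muhat=1^\v$. Since the dual multi-partition of $1^\v$ is $\v^1$, the theorem yields
$$R_\v^{1^\v}(t)=t^{-\frac{1}{2}{\rm dim}\,\calQ_{\overline{C}_{\v^1}}}\,P_c\!\left(\calQ_{\overline{C}_{\v^1}}/_\K,\,t\right)=t^{-\frac{1}{2}{\rm dim}\,\calQ_{\overline{C}_{\v^1}}}\sum_i{\rm dim}\,IH_c^{2i}\!\left(\calQ_{\overline{C}_{\v^1}},\kappa\right)\,t^i.$$

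Finally I would invoke the fact, recorded at the end of the proof of Theorem \ref{Theta}, that the variety $\calQ_{\overline{C}_{\v^1}}$ is rationally smooth (this follows from the rational smoothness of the Zariski closure of the regular nilpotent orbit together with Proposition \ref{restriction}). Therefore $IH_c^i(\calQ_{\overline{C}_{\v^1}},\kappa)=H_c^i(\calQ_{\overline{C}_{\v^1}},\kappa)$ for every $i$, and combining the two identities above gives the claimed formula. There is no real obstacle here: the work has been done in Theorem \ref{maintheo} and Theorem \ref{Theta}; the only thing to be careful about is matching the duality convention so that the Steinberg character on the representation side corresponds to the regular nilpotent orbit on the geometric side, which is precisely the identification $(1^\v)'=\v^1$.
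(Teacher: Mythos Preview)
Your proposal is correct and follows essentially the same route as the paper's own proof, which simply cites Theorem \ref{maintheo}, Proposition \ref{DTprop}, and the rational smoothness of $\calQ_{\overline{C}_{\v^1}}$ from the proof of Theorem \ref{Theta}. You have merely made explicit the identifications ${\rm St}_\v=\calU_{1^\v}$ and $(1^\v)'=\v^1$ that the paper leaves implicit.
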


\begin{proof}This follows from Theorem \ref{maintheo} and  Proposition \ref{DTprop} together with the fact that $\calQ_{\overline{C}_{\v^1}}$ is rationally smooth (see proof of Theorem \ref{Theta}).
\end{proof}
 
 By Theorem \ref{theogeo2} (iii), we have $H_c^i(\Q_{\bfT,\bfB,\xihat},\kappa)\simeq H_c^i(\calQ_S,\kappa)$, where $S$ is semisimple regular generic adjoint orbit of $\gl_\v$.  Therefore, by the discussion at the end of \S \ref{quiver}  we have an action of $\frak{G}_\v$ on $H_c^i(\calQ_S,\kappa)$, and we deduce from Theorem \ref{maintheo} together with Proposition \ref{graded} the following result.
 
 \begin{corollary} The polynomial $t^{\frac{1}{2}{\rm dim}\,\calQ_S}R_\v^\muhat(t)$ is the graded multiplicity of the irreducible character $\chi^{\muhat'}$ in the graded $\kappa[\frak{G}_\v]$-module $H_c^{2\bullet}(\calQ_S/_\K,\kappa)$.
\label{gradedres}\end{corollary}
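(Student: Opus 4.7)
The plan is a three-step chain: invoke Theorem~\ref{maintheo}, feed the resulting Poincar\'e polynomial into Proposition~\ref{graded} applied to $\muhat'$, and then transport the Springer $\frak{G}_\v$-action along the isomorphism of Theorem~\ref{theogeo2}(iii).

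First I would apply Theorem~\ref{maintheo} to rewrite
$$
R_\v^\muhat(t)=t^{-\frac{1}{2}\dim\calQ_{\overline{C}_{\muhat'}}}\,P_c(\calQ_{\overline{C}_{\muhat'}}/_\K,t).
$$
Next, I would feed $\muhat'$ into Proposition~\ref{graded} (so that the irreducible character on the right is $\chi^{\muhat'}$) to obtain
$$
P_c(\calQ_{\overline{C}_{\muhat'}};t)=t^{-\frac{1}{2}\delta_{\muhat'}}\sum_i\langle\rho^{2i},\chi^{\muhat'}\rangle_{\frak{G}_\v}\,t^i,
$$
where $\rho^i$ is the character of the Springer representation on $H_c^i(\Q_{\bfT,\bfB,\xihat},\kappa)$ and $\delta_{\muhat'}$ denotes the codimension of $\calQ_{\overline{C}_{\muhat'}}$ in $\calQ_{\overline{C}_{\v^1}}$.

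The crucial bookkeeping step is the dimension identity
$$
\tfrac{1}{2}\dim\calQ_S=\tfrac{1}{2}\dim\calQ_{\overline{C}_{\v^1}}=\tfrac{1}{2}\dim\calQ_{\overline{C}_{\muhat'}}+\tfrac{1}{2}\delta_{\muhat'},
$$
which holds because $\calQ_S$ is irreducible of the same dimension as $\calQ_{\overline{C}_{\v^1}}$ (as recorded just before Theorem~\ref{theogeo2}, noting that the generic semisimple orbit $S$ paired with $C_{\v^1}$ is regular). Multiplying the two displayed identities by $t^{\frac{1}{2}\dim\calQ_S}$ and combining, the $\delta_{\muhat'}$ and $\dim\calQ_{\overline{C}_{\muhat'}}$ contributions cancel and I am left with
$$
t^{\frac{1}{2}\dim\calQ_S}R_\v^\muhat(t)=\sum_i\langle\rho^{2i},\chi^{\muhat'}\rangle_{\frak{G}_\v}\,t^i.
$$

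The last step is to identify this generating function with the graded multiplicity of $\chi^{\muhat'}$ in $H_c^{2\bullet}(\calQ_S/_\K,\kappa)$: the $\frak{G}_\v$-action on $H_c^i(\calQ_S,\kappa)$ referred to in the statement is, by definition (see the paragraph preceding the corollary), the one transferred from $H_c^i(\Q_{\bfT,\bfB,\xihat},\kappa)$ via the isomorphism of Theorem~\ref{theogeo2}(iii), so this identification is tautological. Consequently, I do not expect a serious obstacle; the proof really reduces to a consistent substitution of $\muhat'$ in place of $\muhat$ in Proposition~\ref{graded}, plus the dimension cancellation above. The only subtlety to watch is the dualization $\muhat\leftrightarrow\muhat'$, since Theorem~\ref{maintheo} already introduces a dual, so one must ensure that the Proposition is applied to the correctly dualised multi-partition so that the character coming out is $\chi^{\muhat'}$ (not $\chi^\muhat$).
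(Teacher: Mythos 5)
Your proposal is correct and follows the same route the paper does: combine Theorem~\ref{maintheo} with Proposition~\ref{graded} applied to $\muhat'$, and use the identification of $H_c^\bullet(\Q_{\bfT,\bfB,\xihat},\kappa)$ with $H_c^\bullet(\calQ_S,\kappa)$ from Theorem~\ref{theogeo2}(iii) to interpret the answer as a graded multiplicity; the dimension bookkeeping $\tfrac12\dim\calQ_S=\tfrac12\dim\calQ_{\overline{C}_{\v^1}}=\tfrac12\dim\calQ_{\overline{C}_{\muhat'}}+\tfrac12\delta_{\muhat'}$ is exactly the cancellation implicit in the paper's one-line derivation.
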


   \end{document}